\newcounter{ass_counter}
\newtheorem{assumption}[ass_counter]{Assumption}
\newcommand{\lambdaminnz}{\lambda_{\rm min,nz}}
\newcommand{\cS}{{\cal S}}
\newcommand{\bfone}{\mathbf{1}}
\newcommand{\cond}{\mbox{\rm cond}}
\newcommand{\Lres}{L_{\mbox{\rm\scriptsize res}}}
\newcommand{\Lmax}{L_{\mbox{\rm\scriptsize max}}}
\newcommand{\beq}{\begin{equation}}
\newcommand{\eeq}{\end{equation}}
\newcommand{\R}{\mathbb{R}}
\def\sjwcommentsolved#1{}
\def\jlcommentsolved#1{}
\begin{document}

\title{Coordinate Descent Algorithms\thanks{The author was supported
    by NSF Awards DMS-1216318 and IIS-1447449, ONR Award
    N00014-13-1-0129, AFOSR Award FA9550-13-1-0138, and Subcontract
    3F-30222 from Argonne National Laboratory.}  }

\titlerunning{Coordinate Descent Algorithms}        

\author{Stephen J. Wright}


\institute{Stephen J. Wright \at
              Department of Computer Sciences, University of Wisconsin-Madison, 1210 W. Dayton St., Madison, WI 53706-1685, USA \\
              Tel.: +1 608 316 4358\\
              \email{swright@cs.wisc.edu}}

\date{Received: date / Accepted: date}

\maketitle

\begin{abstract}
Coordinate descent algorithms solve optimization problems by
successively performing approximate minimization along coordinate
directions or coordinate hyperplanes. They have been used in
applications for many years, and their popularity continues to grow
because of their usefulness in data analysis, machine learning, and
other areas of current interest. This paper describes the fundamentals
of the coordinate descent approach, together with variants and
extensions and their convergence properties, mostly with reference to
convex objectives.  We pay particular attention to a certain problem
structure that arises frequently in machine learning applications,
showing that efficient implementations of accelerated coordinate
descent algorithms are possible for problems of this type. We also
present some parallel variants and discuss their convergence
properties under several models of parallel execution.

\keywords{coordinate descent \and  randomized algorithms \and parallel numerical computing}
\end{abstract}

\section{Introduction}
\label{sec:intro}

Coordinate descent (CD) algorithms for optimization have a history
that dates to the foundation of the discipline. They are iterative
methods in which each iterate is obtained by fixing most components of
the variable vector $x$ at their values from the current iteration,
and approximately minimizing the objective with respect to the
remaining components. Each such subproblem is a lower-dimensional
(even scalar) minimization problem, and thus can typically be solved
more easily than the full problem.

CD methods are the archetype of an almost universal approach to
algorithmic optimization: solving an optimization problem by solving a
sequence of simpler optimization problems. The obviousness of the CD
approach and its acceptable performance in many situations probably
account for its long-standing appeal among
practitioners. Paradoxically, the apparent lack of sophistication may
also account for its unpopularity as a subject for investigation by
optimization researchers, who have usually been quick to suggest
alternative approaches in any given situation. There are some very
notable exceptions. The 1970 text of Ortega and
Rheinboldt~\cite[Section~14.6]{OrtR70} included a comprehensive
discussion of ``univariate relaxation,'' and such optimization
specialists as Luo and Tseng~\cite{LuoT92a,LuoT93a},
Tseng~\cite{Tse01a}, and Bertsekas and Tsitsiklis~\cite{BerT89} made
important contributions to understanding the convergence properties of
these methods in the 1980s and 1990s.


The situation has changed in recent years. Various applications
(including several in computational statistics and machine learning)
have yielded problems for which CD approaches are competitive in
performance with more reputable alternatives. The properties of these
problems (for example, the low cost of calculating one component of
the gradient, and the need for solutions of only modest accuracy) lend
themselves well to efficient implementations of CD, and CD methods can
be adapted well to handle such special features of these applications
as nonsmooth regularization terms and a small number of equality
constraints. At the same time, there have been improvements in the
algorithms themselves and in our understanding of them. Besides their
extension to handle the features just mentioned, new variants that
make use of randomization and acceleration have been
introduced. Parallel implementations that lend themselves well to
modern computer architectures have been implemented and
analyzed. Perhaps most surprisingly, these developments are relevant
even to the most fundamental problem in numerical computation: solving
the linear equations $Aw=b$.

In the remainder of this section, we state the problem types for which
CD methods have been developed, and sketch the most fundamental
versions of CD. Section~\ref{sec:applications} surveys applications
both historical and modern. Section~\ref{sec:algs} sketches the types
of algorithms that have been implemented and analyzed, and presents
several representative convergence results. Section~\ref{sec:parallel}
focuses on parallel CD methods, describing the behavior of these
methods under synchronous and asynchronous models of computation.

Our approach throughout is to describe the CD methods in their
simplest forms, to illustrate the fundamentals of the applications,
implementations, and analysis.  We focus almost exclusively on methods
that adjust just one coordinate on each iteration. Most applications
use {\em block} coordinate descent methods, which adjust groups of
blocks of indices at each iteration, thus searching along a coordinate
hyperplane rather than a single coordinate direction.  Most derivation
and analysis of single-coordinate descent methods can be extended
without great difficulty to the block-CD setting; the concepts do not
change fundamentally.  
We mention too that much effort has been devoted to developing more
general forms of CD algorithms and analysis, involving weighted norms
and other features, that allow more flexible implementation and allow
the proof of stronger and more general (though usually not
qualitatively different) convergence results.

\subsection{Formulations}

The problem considered in most of this paper is the following
unconstrained minimization problem:
\begin{equation} \label{eq:f}
\min_x \, f(x),
\end{equation}
where $f:\R^n \to \R$ is continuous. Different variants of CD make
further assumptions about $f$. Sometimes it is assumed to be smooth
and convex, sometimes smooth and possibly nonconvex, and sometimes
smooth but with a restricted domain. (We will make such assumptions
clear in each discussion of algorithmic variants and convergence
results.)

Motivated by recent popular applications, it is common to consider the
following structured formulation:
\begin{equation}
\label{eq:reg}
\min_{x} \,  h(x) := f(x) + \lambda \Omega(x),
\end{equation}
where $f$ is smooth, $\Omega$ is a regularization function that may be
nonsmooth and extended-valued, and $\lambda>0$ is a regularization
parameter. $\Omega$ is often convex and usually assumed to be
separable or block-separable.  When separable, $\Omega$ has the form
\begin{equation} \label{eq:reg.sep}
\Omega(x) = \sum_{i=1}^n \Omega_i(x_i).
\end{equation}
where $\Omega_i: \R \to \R$ for all $i$. The best known examples of
separability are the $\ell_1$-norm (in which $\Omega(x) = \|x\|_1$ and
hence $\Omega_i(x_i) = |x_i|$) and box constraints (in which
$\Omega_i(x_i) = I_{[l_i,u_i]}(x_i)$ is the indicator function for the
interval $[l_i,u_i]$).  Block separability means that the $n \times n$
identity matrix can be partitioned into column submatrices $U_i$,
$i=1,2,\dotsc,N$ such that
\begin{equation} \label{eq:reg.bsep}
\Omega(x) = \sum_{i=1}^N \Omega_i(U_i^Tx).
\end{equation}
Block-separable examples include group-sparse regularizers in which
$\Omega_i(z_i) := \|z_i\|_2$.  Formulations of the type
\eqref{eq:reg}, with separable or block-separable regularizers, arise
in such applications as compressed sensing, statistical variable
selection, and model selection.


The class of problems known as empirical risk minimization (ERM) gives rise
to a formulation that is particularly amenable to  coordinate descent;
see \cite{ShaZ13e}.
These problems have the form
\begin{equation} \label{eq:erm}
\min_{w \in \R^d} \, \frac{1}{n} \sum_{i=1}^n \phi_i(c_i^Tw) + \lambda g(w),
\end{equation}
for vectors $c_i \in \R^d$, $i=1,2,\dotsc,n$ and convex functions
$\phi_i$, $i=1,2,\dotsc,n$ and $g$. We can express linear
least-squares, logistic regression, support vector machines, and other
problems in this framework.  Recalling the following definition of the
conjugate $t^*$ of a convex function $t$:
\begin{equation} \label{eq:conj}
t^*(y) = \sup_z (z^Ty-t(z)),
\end{equation}
we can write the Fenchel dual~\cite[Section~31]{Roc70} of
\eqref{eq:erm} as follows:
\begin{equation} \label{eq:erm.dual}
\min_{x \in \R^n} \frac1n \sum_{i=1}^n \phi_i^*(-x_i) + \lambda g^*
\left( \frac{1}{\lambda n} Cx \right),
\end{equation}
where $C$ is the $d \times n$ matrix whose columns are $c_i$,
$i=1,2,\dotsc,n$.  The dual formulation \eqref{eq:erm.dual} is has
special appeal as a target for coordinate descent, because of
separability of the summation term. 

One interesting case is the system of linear equations
\begin{equation} \label{eq:Aw=b}
Aw=b, \quad \mbox{where $A \in \R^{m \times n}$},
\end{equation}
which we assume to be a feasible system. The least-norm solution is
found by solving
\begin{equation} \label{eq:kac.primal}
\min_{w \in \R^n}  \, \frac12 \|w\|_2^2 \;\; \mbox{subject to} \; Aw=b,
\end{equation}
whose Lagrangian dual is 
\begin{equation} \label{eq:kac}
\min_{x \in \R^m} \, f(x) := \frac12 \|A^Tx\|_2^2 - b^Tx.
\end{equation}
(We recover the primal solution from \eqref{eq:kac} by setting $w=A^Tx$.)
We can see that \eqref{eq:kac} is a special case of the Fenchel dual
\eqref{eq:erm.dual} obtained from \eqref{eq:erm} if we set
\[
C \leftarrow A^T, \quad
g(w) = \frac12 \|w\|_2^2, \quad
\phi_i(t_i) = I_{\{b_i\}}(t_i), \quad
\lambda=1/n,
\]
where $I_{\{b_i\}}$ denotes the indicator function for $b_i$, which is
zero at $b_i$ and infinite elsewhere.  (Its conjugate is
$I^*_{\{b_i\}}(s_i) = b_i s_i$.) The primal problem \eqref{eq:kac.primal}
can be restated correspondingly as
\[
\min_{w \in \R^n} \, \frac{1}{m} \sum_{i=1}^m  I_{\{b_i\}}(A_{i} w)  + \frac{1}{n} \| w\|_2^2,
\]
where $A_{i}$ denotes the $i$th row of the matrix $A$ in \eqref{eq:Aw=b},
which has the form \eqref{eq:erm}. 

\subsection{Outline of Coordinate Descent Algorithms}

The basic coordinate descent framework for continuously differentiable
minimization is shown in Algorithm~\ref{alg:cd}. Each step consists of
evaluation of a single component $i_k$ of the gradient $\nabla f$ at
the current point, followed by adjustment of the $i_k$ component of
$x$, in the opposite direction to this gradient component. (Here and
throughout, we use $[\nabla f(x)]_i$ to denote the $i$th component of
the gradient $\nabla f(x)$.) There is much scope for variation within
this framework. The components can be selected in a cyclic fashion, in
which $i_0=1$ and 
\begin{equation} \label{eq:cyc}
i_{k+1} = [i_k \; \mbox{mod} \; n] + 1, \quad k=0,1,2,\dotsc.
\end{equation}
They can be required to satisfy an ``essentially cyclic'' condition,
in which for some $T\ge n$, each component is modified at least once in
every stretch of $T$ iterations, that is,
\begin{equation} \label{eq:ess.cyc}
\cup_{j=0}^T \{ i_{k-j} \} = \{1,2,\dotsc,n \},\quad \mbox{for all $k \ge T$.}
\end{equation}
Alternatively, they can be selected randomly  at each
iteration (though not necessarily with equal probability).  Turning to
steplength $\alpha_k$: we may perform exact minimization along the
$i_k$ component, or choose a value of $\alpha_k$ that satisfies
traditional line-search conditions (such as sufficient decrease), or
make a predefined ``short-step'' choice of $\alpha_k$ based on prior
knowledge of the properties of $f$.

\begin{algorithm} 
\caption{Coordinate Descent for \eqref{eq:f}\label{alg:cd}}
\begin{algorithmic}
\State Set $k \leftarrow 0$ and choose $x^0 \in \R^n$;
\Repeat
\State Choose index $i_k \in \{1,2,\dotsc,n\}$;
\State $x^{k+1} \leftarrow x^k - \alpha_k [\nabla f(x^k)]_{i_k} e_{i_k}$ for some $\alpha_k>0$;
\State $k \leftarrow k+1$;
\Until termination test satisfied;
\end{algorithmic}
\end{algorithm}

The CD framework for the separable regularized problem \eqref{eq:reg},
\eqref{eq:reg.sep} is shown in Algorithm~\ref{alg:cdreg}.  At
iteration $k$, a scalar subproblem is formed by making a linear
approximation to $f$ along the $i_k$ coordinate direction at the
current iterate $x^k$, adding a quadratic damping term weighted by
$1/\alpha_k$ (where $\alpha_k$ plays the role of a steplength), and
treating the relevant regularization term $\Omega_i$ explicitly. Note
that when the regularizer $\Omega_i$ is not present, the step is
identical to the one taken in Algorithm~\ref{alg:cd}. For some
interesting choices of $\Omega_i$ (for example $\Omega_i (\cdot) = |
\cdot |$), it is possible to write down a closed-form solution of the
subproblem; no explicit search is needed. The operation of solving
such subproblems is often referred to as a ``shrink operation,''
which we denote by $S_{\beta}$ and define as follows:
\begin{equation} \label{eq:shrink}
S_{\beta}(\tau) := \min_{\chi} \frac{1}{2\beta} \| \chi - \tau \|_2^2 + \Omega_i(\chi).
\end{equation}
By stating the subproblem in Algorithm~\ref{alg:cdreg} equivalently as 
\[
\min_{\chi} \, \frac{1}{2 \lambda \alpha_k} \left\| \chi - (x^k_{i_k} - \alpha_k [\nabla f(x^k)]_{i_k}) \right\|^2 + \Omega_i(\chi),
\]
we can express the CD update as $z^k_{i_k} \leftarrow S_{\lambda
  \alpha_k}(x^k_{i_k} - \alpha_k [\nabla f(x^k)]_{i_k})$.

\begin{algorithm} 
\caption{Coordinate Descent for \eqref{eq:reg},\eqref{eq:reg.sep}\label{alg:cdreg}}
\begin{algorithmic}
\State Set $k \leftarrow 0$ and choose $x^0 \in \R^n$;
\Repeat
\State Choose index $i_k \in \{1,2,\dotsc,n\}$;
\State $z^k_{i_k} \leftarrow \arg\min_{\chi} \, 
(\chi-x^k_{i_k})^T [\nabla f(x^k)]_{i_k} + \frac{1}{2\alpha_k} \|\chi-x^k_{i_k}\|_2^2 + \lambda \Omega_i(\chi)$ for some $\alpha_k>0$;
\State $x^{k+1} \leftarrow x^k + (z^k_{i_k}-x^k_{i_k}) e_{i_k}$;
\State $k \leftarrow k+1$;
\Until termination test satisfied;
\end{algorithmic}
\end{algorithm}

Algorithms~\ref{alg:cd} and \ref{alg:cdreg} can be extended to
block-CD algorithms in a straightforward way, by updating a block of
coordinates (denoted by the column submatrix $U_{i_k}$ of the identity
matrix) rather than a single coordinate. 
In Algorithm~\ref{alg:cdreg}, it is assumed that the choice of block
is consistent with the block-separable structure of the regularization
function $\Omega$, that is, $U_{i_k}$ is a concatenation of several of
the submatrices $U_i$ in \eqref{eq:reg.bsep}.

\subsection{Application to Linear Equations}

For the formulation \eqref{eq:kac} that arises from the linear system
$Aw=b$, let us assume that the rows of $A$ are normalized, that is,
\begin{equation} \label{eq:Anormalized}
\| A_{i} \|_2=1 \quad \mbox{for $i=1,2,\dotsc,m$.}
\end{equation}
Applying Algorithm~\ref{alg:cd} to \eqref{eq:kac} with $\alpha_k
\equiv 1$, each step has the form
\begin{equation} \label{eq:kac.y}
x^{k+1} \leftarrow x^k - (A_{i_k} A^T x^k - b_{i_k}) e_{i_k}.
\end{equation}
If we  maintain and update the estimate $w^k$ of the solution to
the primal problem \eqref{eq:kac.primal} after each update of $x^k$,
according to $w^k = A^T x^k$, we obtain
\begin{equation} \label{eq:kac.w}
w^{k+1} \leftarrow w^k - (A_{i_k} A^T x^k - b_{i_k}) A_{i_k}^T =
w^k  -  (A_{i_k} w^k - b_{i_k}) A_{i_k}^T,
\end{equation}
which is the update formula for the Kaczmarz algorithm
\cite{kaczmarz37}.  Following this update, we have using
\eqref{eq:Anormalized} that
\[
A_{i_k} w^{k+1} = A_{i_k} w^k - (A_{i_k} w^k - b_{i_k}) = b_{i_k},
\]
so that the $i_k$ equation in the system $Aw=b$ is now satisfied. This
method if sometimes known as the ``method of successive projections''
because it projects onto the feasible hyperplane for a single
constraint at every iteration.

\subsection{Relationship to Other Methods}

Stochastic gradient (SG) methods, also undergoing a revival of
interest because of their usefulness in data analysis and machine
learning applications, minimize a smooth function $f$ by taking a
(negative) step along an estimate $g^k$ of the gradient $\nabla
f(x^k)$ at iteration $k$. It is often assumed that $g^k$ is an
unbiased estimate of $\nabla f(x^k)$, that is, $\nabla f(x^k) =
E(g^k)$, where the expectation is taken over whatever random variables
were used in obtaining $g^k$ from the current iterate
$x^k$. Randomized CD algorithms can be viewed as a special case of SG
methods, in which $g^k = n [\nabla f(x^k)]_{i_k} e_{i_k}$, where $i_k$
is chosen uniformly at random from $\{1,2,\dotsc,n\}$.  Here, $i_k$ is
the random variable, and we have
\[
E(g^k) = \frac{1}{n} \sum_{i=1}^n n [\nabla f(x^k)]_{i} e_{i} =  \nabla f(x^k),
\]
certifying unbiasedness. However, CD algorithms have the advantage
over general SG methods that descent in $f$ can be guaranteed at every
iteration.  Moreover, the variance of the gradient estimate $g^k$
shrinks to zero as the iterates converge to a solution $x^*$, since
every component of $\nabla f(x^*)$ is zero. By contrast, in general SG
methods, the gradient estimates $g^k$ may be nonzero even when $x^k$
is a solution.

The relationship between CD and SG methods can also be discerned from
the Fenchel dual pair \eqref{eq:erm} and \eqref{eq:erm.dual}. SG
methods are quite popular for solving formulation \eqref{eq:erm},
where the estimate $g^k$ is obtained by taking a single term $i_k$
from the summation and using $\nabla \phi_{i_k} (c_{i_k}^T w) c_{i_k}$
as the estimate of the gradient of the {\em full} summation. This
approach corresponds to applying CD to the dual \eqref{eq:erm.dual},
where the component $i_k$ of $x$ is selected for updating at iteration
$k$. This relationship is typified by the Kaczmarz algorithm for
$Aw=b$, which can be derived either as CD applied to the dual
formulation \eqref{eq:kac} or as SG applied to the sum-of-squares
problem 
\begin{equation} \label{eq:sos}
\min_w \, \frac12 \| Aw-b \|_2^2 =  \frac12 \sum_{i=1}^m (A_{i} w-b_i)^2.
\end{equation}


CD is related in an obvious way to the Gauss-Seidel method for $n
\times n$ systems of linear equations, which adjusts the $i_k$
variable to ensure satisfaction of the $i_k$ equation, at iteration
$k$. (Successive over-relaxation (SOR) modifies this approach by
scaling each Gauss-Seidel step by a factor $(1+\omega)$ for some
constan $\omega \in [0,1)$, chosen so as to improve the convergence
  rate.)  Standard Gauss-Seidel and SOR use the cyclic choice of
  coordinates \eqref{eq:cyc}, whereas a random choice of $i_k$ would
  correspond to ``randomized'' versions of these methods. To make the
  connections more explicit: The Gauss-Seidel method applied to the
  normal equations for \eqref{eq:Aw=b} --- that is, $A^TAw=A^Tb$ ---
  is equivalent to applying Algorithm~\ref{alg:cd} to the
  least-squares problem \eqref{eq:sos}, when the steplength $\alpha_k$
  is chosen to minimize the objective exactly along the given
  coordinate direction. SOR also corresponds to
  Algorithm~\ref{alg:cd}, with $\alpha_k$ chosen to be a factor
  $(1+\omega)$ times the exact minimum. These equivalences allow the
  results of Section~\ref{sec:algs} to be used to derive convergence
  rates for Gauss-Seidel applied to the normal equations, including
  linear convergence when $A^TA$ is nonsingular. Note that these
  results do not require feasibility of the original equations
  \eqref{eq:Aw=b}.

\section{Applications}
\label{sec:applications}

We mention here several applications of CD methods to practical
problems, some dating back decades and others relatively new. Our list
is necessarily incomplete, but it attests to the popularity of CD in a
wide variety of application communities.

Bouman and Sauer \cite{BouS96} discuss an application to positron
emission tomography (PET) in which the objective has the form
\eqref{eq:reg} where $f$ is smooth and convex and $\Omega$ is a sum of
terms of the form $|x_j-x_l|^q$ for some pairs of components $(j,l)$
of $x$ and some $q \in [1,2]$. Ye et al.~\cite{Ye:99} apply a similar
method to a different objective arising from optical diffusion
tomography.

Liu, Paratucco, and Zhang~\cite{Liu:2009:BCD:1553374.1553458} describe
a block CD approach for linear least squares plus a regularization
function consisting of a sum of $\ell_{\infty}$ norms of subvectors of
$x$. The technique is applied to semantic basis discovery, which
learns from data how to identify and classify the functional MRI
response of a person's brain when they hear certain English words.

Canutescu and Dunbrack~\cite{CanD03} describe a cyclic coordinate
descent method for determining protein structure, adjusting the
dihedral angles in a protein chain so that the atom at the end of the
chain comes close to a specified position in space.

Florian and Chen~\cite{FloC95} recover origin-destination matrices
from observed traffic flows by alternately solving a bilevel
optimization problem over two blocks of variables: the
origin-destination demands and the proportion of each
origin-destination flow assigned to each arc in the network.

Breheny and Huang~\cite{BreH11} discuss coordinate descent for linear
and logistic regression with nonconvex separable regularization terms,
reporting results for genetic association and gene expression studies.
The SparseNet algorithm~\cite{MazFH11} applied to problems with these
same nonconvex separable regularizers uses warm-started cyclic
coordinate descent as an inner loop to solve a sequence of problems in
which the regularization parameter $\lambda$ in \eqref{eq:reg} and the
parameters defining concavity of the regularization functions are
varied.

Friedman, Hastie, and Tibshirani~\cite{FriHT08a} propose a block CD
algorithm for estimating a sparse inverse covariance matrix, given a
sample covariance matrix $S$ and taking the variable in their
formulation to be a modification $W$ of $S$, such that $W^{-1}$ is
sparse. The resulting ``graphical lasso'' algorithm cycles through the
rows/columns of $W$ (in the style of block CD), solving a standard
lasso problem to calculate each update.  The same authors
\cite{FriHT10a} apply CD to generalized linear models such as linear
least squares and logistic regression, with convex regularization
terms. Their framework include such formulations as lasso, graphical
lasso, elastic net, and the Dantzig selector, and is implemented in
the package {\tt glmnet}.

Chang, Hsieh, and Lin~\cite{ChaHL08a} use cyclic and stochastic CD to
solve a squared-loss formulation of the support vector machine (SVM)
problem in machine learning, that is,
\begin{equation} \label{eq:svm.primal}
\min_w \,  \sum_{i=1}^m \max(1-y_i x_i^Tw,0)^2 + \frac{\lambda}{2} w^Tw.
\end{equation}
where $(x_i,y_i) \in \R^N \times \{0,1\}$ are feature vector / label
pairs and $\lambda$ is a regularization parameter. This problem is an
important instance of the ERM form \eqref{eq:erm}. In the best known
early application of coordinate descent to SVM, Platt~\cite{Platt99}
deals with a hinge-loss formulation of SVM, which is identical to
\eqref{eq:svm.primal} except that the square on each term of the
summation is omitted. The dual of this problem has bounds on its
variables along with a single linear constraint. Platt's procedure SMO
(for ``sequential minimal optimization''), applied to the dual,
changes two variables at a time, with the variable pair chosen
according to a ``greedy'' criterion and the search direction chosen to
maintain feasibility of the linear constraint.

Sardy, Bruce, and Tseng~\cite{SarBT00} consider the basis-pursuit
formulation of wavelet denoising:
\[
\min_x \, \frac12 \| \Phi x - y \|_2^2 + \lambda \|x\|_1.
\]
This formulation is equivalent to the well known lasso of
Tibshirani~\cite{Tib96} and has become famous because of its
applicability to sparse recovery and compressed sensing. Although this
formulation fits the ERM framework \eqref{eq:erm} and could thus be
dualized before applying CD, the approach of \cite{SarBT00} applies
block CD directly to the primal formulation.

Applications of block CD approaches to transceiver design for cellular
networks and to tensor factorization are discussed in
Razaviyayn~\cite[Section~8]{RazHL13}.

Finally, we mention several popular problem classes and algorithms
that can be interpreted as CD algorithms, but for which such an
interpretation may not be particularly helpful in understanding the
performance of the algorithm. First, we consider low-rank matrix
completion problems in which we are presented with limited information
about a rectangular matrix $M \in \R^{m \times n}$ and seek matrices
$U \in \R^{n \times r}$ and $V \in \R^{m \times r}$ (with $r$ small)
such that $UV^T$ is consistent with the observations of $M$. When the
observations satisfy a restricted isometry property (an assumption
commonly made in compressed sensing; see
\cite[Definition~3.1]{RecFP08} for a definition that applies to matrix
completion), the block CD approach of Jain, Netrapalli, and
Sanghavi~\cite[Algorithm~1]{JaiNS12a} converges to a solution. This
approach defines the objective to be the least-squares fit between the
observations and their predicted values according to the product
$UV^T$ --- a function that is nonconvex with respect to $(U,V)$ ---
and minimizes alternately over $U$ and $V$, respectively. Standard
analysis of CD for nonconvex functions would yield at best
stationarity of accumulation points, but much stronger results are
attained in \cite{JaiNS12a} because of special assumptions that are
made on the problem in this paper.

Second, we consider the ``alternating-direction method of
multipliers'' (ADMM) \cite{EckB92,BoyPCPE11a}, which has gained great
currency in the past few years because of its usefulness in solving
regularized problems in statistics and machine learning, and in
designing parallel algorithms. Each major iteration of ADMM consists
of an (approximate) minimization of the augmented Lagrangian function
for a constrained optimization problem over each block of primal
variables in turn, followed by an update to the Lagrange multiplier
estimates. It might seem appealing to do multiple cycles of updating
the primal variable blocks, in the manner of cyclic block CD, thus
finding a better approximation to the solution of each subproblem over
{\em all} primal variables and moving the method closer to the
standard augmented Lagrangian approach. Eckstein and Yao~\cite{EckY14}
show, however, that this ``approximate augmented Lagrangian'' approach
has a fundamentally different theoretical interpretation from ADMM,
and a computational comparison between the two approaches
\cite[Section~5]{EckY14} appears to show an advantage for ADMM.

\section{Coordinate Descent: Algorithms, Convergence, Implementations}
\label{sec:algs}

We now describe the most important variants of coordinate descent and
present their convergence properties, including the proofs of some
fundamental results. We also discuss the implementation of accelerated
CD methods for problems of the form \eqref{eq:erm.dual} and for the
Kaczmarz algorithm for $Aw=b$. As mentioned in the introduction, we
deal with the most elementary framework possible, to expose the
essential properties of the methods.

\subsection{Powell's Example}

We start with a simple but intriguing example due to
Powell~\cite[formula (2)]{Pow73b} of a function in $\R^3$ for which
cyclic CD fails to converge to a stationary point. The nonconvex,
continuously differentiable function $f:\R^3 \to \R$ is defined as
follows:
\begin{equation} \label{eq:powell}
f(x_1,x_2,x_3) = -(x_1 x_2 + x_2 x_3 + x_1 x_3) + \sum_{i=1}^3 (|x_i|-1)_+^2.
\end{equation}
It has minimizers at the corners $(1,1,1)^T$ and $(-1,-1,-1)^T$ of the
unit cube, but coordinate descent with exact minimization, started
near (but just outside of) one of the other vertices of the cube
cycles around the neighborhoods of six points that are close to the
six non-optimal vertices. Powell shows that the cyclic nonconvergence
behavior is rather special and is destroyed by small perturbations on
this particular example, and we can note that a randomized coordinate
descent method applied to this example would be expected to converge
to the vicinity of a solution within a few steps.  Still, this example
and others in \cite{Pow73b} make it clear that we cannot expect a
general convergence result for nonconvex functions, of the type that
are available for full-gradient descent. Results are available for the
nonconvex case under certain additional assumptions that still admit
interesting applications.  Bertsekas~\cite[Proposition~2.7.1]{Ber99}
describes convergence of a cyclic approach applied to nonconvex
problems, under the assumption that the minimizer along any coordinate
direction from any point $x$ is unique. More recent work
\cite{Att10,BolST14} focuses on CD with two blocks of variables,
applied to functions that satisfy the so-called Kurdyka-{\L}ojasiewicz
(KL) property, such as semi-algebraic functions. Convergence of
subsequences or the full sequence $\{ x^k \}$ to stationary points can
be proved in this setting.

\begin{figure}
\centering\includegraphics[width=3in]{./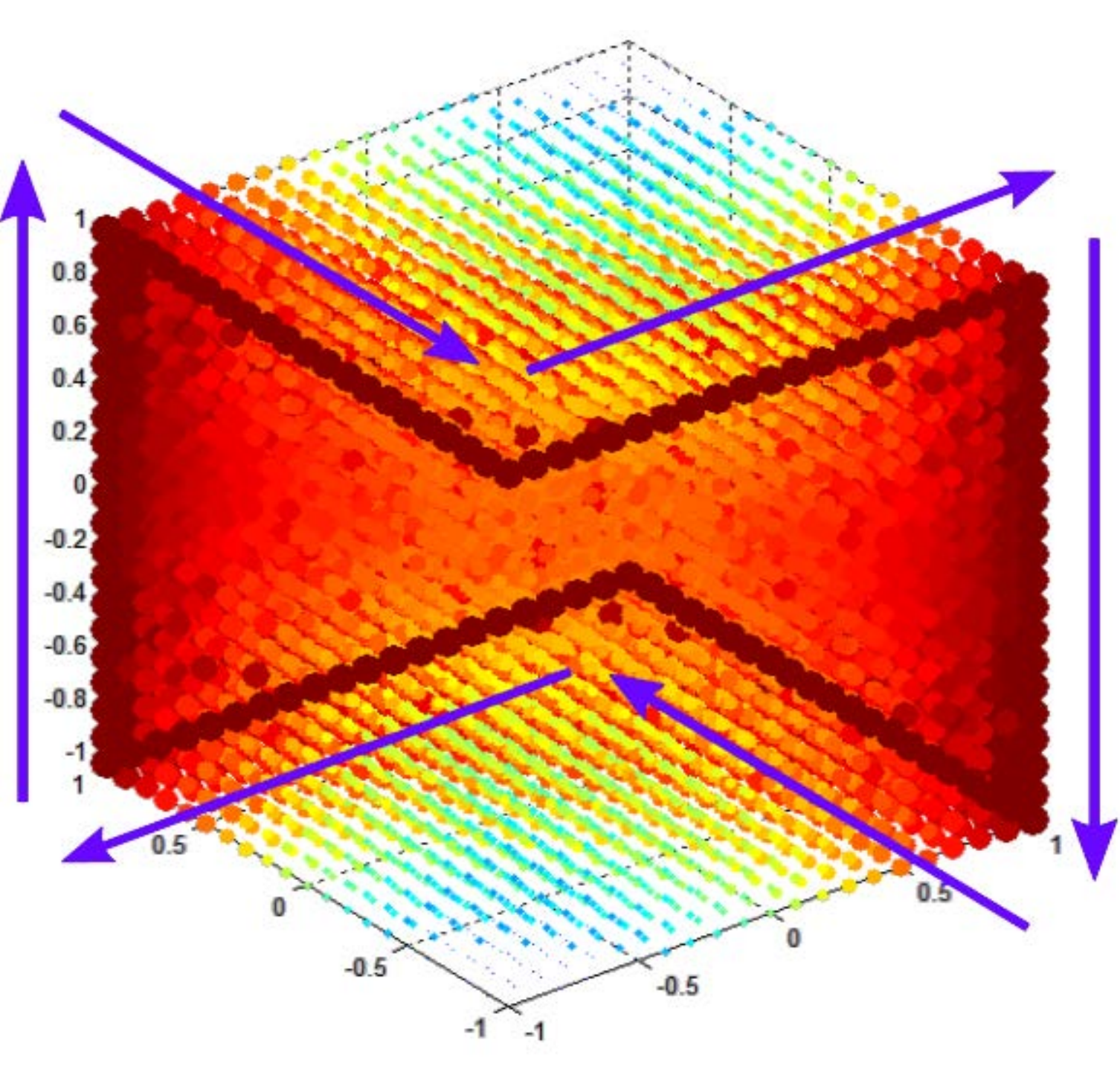}
\caption{Example of Powell~\cite{Pow73b} showing nonconvergence of
  cyclic coordinate descent.\label{fig:powell}}
\end{figure}

\subsection{Assumptions and Notation} \label{sec:assumptions}

For most of this section, we focus on the unconstrained problem
\eqref{eq:f}, where the objective $f$ is {\em convex} and Lipschitz
continuously differentiable. In some places, we assume strong
convexity with respect to the Euclidean norm, that is, existence of a
modulus of convexity $\sigma>0$ such that
\begin{equation} \label{eq:strong.convex}
f(y) \ge f(x) + \nabla f(x)^T(y-x) + \frac{\sigma}{2} \| y-x\|_2^2,
\quad \mbox{for all $x,y$.}
\end{equation}
(Henceforth, we use $\| \cdot \|$ to denote the Euclidean norm $\|
\cdot \|_2$, unless otherwise specified.)
We define Lipschitz constants that are tied to the component
directions, and are key to the algorithms and their analysis. The
first set of such constants are the {\em component Lipschitz
  constants}, which are positive quantities $L_i$ such that for all $x
\in \R^n$ and all $t \in \R$ we have
\begin{equation} \label{eq:Li}
| [\nabla f(x+te_i)]_i - [\nabla f(x)]_i| \le L_i |t|,
\end{equation}
We define the {\em coordinate Lipschitz constant} $\Lmax$ to be such that 
\begin{equation} \label{eq:Lmax}
\Lmax = \max_{i=1,2,\dotsc,n} \, L_i.
\end{equation}
The standard Lipschitz constant $L$ is such that 
\begin{equation} \label{eq:L}
\| \nabla f(x+d) - \nabla f(x) \| \le L \|d\|,
\end{equation}
for all $x$ and $d$ of interest. By referring to relationships between
norm and trace of a symmetric matrix, we can assume that
$1 \le L/\Lmax \le n$. (The upper bound is achieved when $f(x) = e(e^Tx)$, for $e=(1,1,\dotsc,1)^T$.)
 We also define the {\em restricted Lipschitz constant} $\Lres$ such
 that the following property is true for all $x \in \R^n$, all $t \in
 \R$, and all $i=1,2,\dotsc,n$:
\begin{equation} \label{eq:Lres}
\| \nabla f(x+te_i) - \nabla f(x) \| \le \Lres |t|.
\end{equation}
Clearly, $\Lres \le L$.
The ratio
\begin{equation} \label{eq:def:Lambda}
\Lambda := \Lres/\Lmax
\end{equation}
is  important in our analysis of asynchronous parallel
algorithms in Section~\ref{sec:parallel}.  In the case of $f$ convex and
twice continuously differentiable, we have by positive
semidefiniteness of the $\nabla^2 f(x)$ at all $x$ that
\[
| [\nabla^2 f(x)]_{ij} | \le \left( [\nabla^2 f(x)]_{ii}[\nabla^2 f(x)]_{jj} 
\right)^{1/2},
\]
from which we can deduce that 
\[
1 \le \Lambda \le \sqrt{n}.
\]
However, we can derive stronger bounds on $\Lambda$ for functions $f$
in which the coupling between components of $x$ is weak. In the
extreme case in which $f$ is separable, we have $\Lambda=1$.  The
coordinate Lipschitz constant corresponds $\Lmax$ to the maximal
absolute value of the diagonal elements of the Hessian $\nabla^2
f(x)$, while the restricted Lipschitz constant $\Lres$ is related to
the maximal column norm of the Hessian. Therefore, if the Hessian is
positive semidefinite and diagonally dominant, the ratio $\Lambda$ is
at most $2$.

The following assumption is useful in the remainder of the paper.
\begin{assumption} \label{ass:fconv}
The function $f$ in \eqref{eq:f} is convex and uniformly Lipschitz
continuously differentiable, and attains its minimum value $f^*$ on a
set $\cS$. There is a finite $R_0$ such that the level set for $f$
defined by $x^0$ is bounded, that is,
\begin{equation} \label{eq:R0}
\max_{x^* \in \cS} \, \max_x \, \{ \| x-x^*\| \, : \, f(x) \le f(x^0) \} \le R_0.
\end{equation}
\end{assumption}

\subsection{Randomized Algorithms}
\label{sec:random}

In randomized CD algorithms, the update component $i_k$ is chosen
randomly at each iteration. In Algorithm~\ref{alg:rcd} we consider the
simplest variant in which each $i_k$ is selected from
$\{1,2,\dotsc,n\}$ with equal probability, independently of the
selections made at previous iterations. (We can think of this scheme
as ``sampling with replacement'' from the set $\{1,2,\dotsc,n\}$.)

\begin{algorithm} 
\caption{Randomized Coordinate Descent for \eqref{eq:f}\label{alg:rcd}}
\begin{algorithmic}
\State Choose $x^0 \in \R^n$;
\State Set $k \leftarrow 0$;
\Repeat
\State Choose index $i_k$ with uniform probability from
$\{1,2,\dotsc,n\}$, independently of choices at prior iterations;
\State Set $x^{k+1} \leftarrow x^k - \alpha_k [\nabla f(x^k)]_{i_k} e_{i_k}$ for some $\alpha_k>0$; 
\State $k \leftarrow k+1$;
\Until termination test satisfied;
\end{algorithmic}
\end{algorithm}

We denote expectation with respect to a single random index $i_k$ by
$E_{i_k} (\cdot)$, while $E(\cdot)$ denotes expectation with respect
to all random variables $i_0,i_1,i_2,\dotsc$.


We prove a convergence result for the randomized algorithm, for the
simple steplength choice $\alpha_k \equiv 1/\Lmax$. (The proof is a
simplified version of the analysis in
Nesterov~\cite[Section~2]{Nes10a}. A result similar to \eqref{eq:1k}
is proved by Shalev-Schwartz and Tewari~\cite{ShaT11a} for certain
types of $\ell_1$-regularized problems.)
\begin{theorem} \label{th:rcd}
Suppose that Assumption~\ref{ass:fconv} holds. Suppose that $\alpha_k
\equiv 1/\Lmax$ in Algorithm~\ref{alg:rcd}. Then for all $k>0$ we have
\begin{equation} \label{eq:1k}
E(f(x^k)) - f^* \le \frac{2n\Lmax R_0^2}{k}.
\end{equation}
When $\sigma>0$ in \eqref{eq:strong.convex}, we have in addition that
\begin{equation} \label{eq:rcd.linear}
E \left( f(x^k) \right) - f^* \le \left( 1-\frac{\sigma}{n \Lmax}
\right)^k (f(x^0)-f^*).
\end{equation}
\end{theorem}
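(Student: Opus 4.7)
The plan is to derive a one-step decrease from the coordinate Lipschitz property, take a conditional expectation to relate the decrease to $\|\nabla f(x^k)\|^2$, then close the recursion in two different ways depending on whether we use the bounded level set (sublinear rate) or strong convexity (linear rate).

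First I would establish the per-iteration descent. Because $L_{i_k} \le L_{\max}$, the standard quadratic upper bound applied along the $i_k$ coordinate gives
\begin{equation*}
f(x^{k+1}) \le f(x^k) + [\nabla f(x^k)]_{i_k} (x^{k+1}_{i_k} - x^k_{i_k}) + \tfrac{L_{\max}}{2}(x^{k+1}_{i_k}-x^k_{i_k})^2 = f(x^k) - \tfrac{1}{2L_{\max}} [\nabla f(x^k)]_{i_k}^2,
\end{equation*}
since the update uses $\alpha_k = 1/L_{\max}$. In particular $f(x^{k+1}) \le f(x^k)$, so every iterate remains in the level set defined by $x^0$ and Assumption~\ref{ass:fconv} applies. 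Taking conditional expectation with respect to $i_k$ (uniform on $\{1,\dots,n\}$) yields
\begin{equation*}
E_{i_k}\bigl[f(x^{k+1})\bigr] \le f(x^k) - \tfrac{1}{2nL_{\max}} \|\nabla f(x^k)\|^2.
\end{equation*}

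For the sublinear rate, I would use convexity to bound $f(x^k)-f^*$ below by the gradient. Picking $x^* \in \cS$ closest to $x^k$ and invoking $R_0$,
\begin{equation*}
f(x^k) - f^* \le \nabla f(x^k)^T(x^k - x^*) \le \|\nabla f(x^k)\|\,\|x^k - x^*\| \le R_0 \|\nabla f(x^k)\|.
\end{equation*}
Substituting $\|\nabla f(x^k)\|^2 \ge (f(x^k)-f^*)^2/R_0^2$, subtracting $f^*$ from both sides, taking total expectation, and using Jensen's inequality $E[(f(x^k)-f^*)^2] \ge (E[f(x^k)-f^*])^2$, I get the scalar recursion
\begin{equation*}
\phi_{k+1} \le \phi_k - \tfrac{1}{2nL_{\max}R_0^2}\phi_k^2, \qquad \phi_k := E[f(x^k)] - f^*.
\end{equation*}
Dividing by $\phi_k \phi_{k+1}$ (and using $\phi_{k+1} \le \phi_k$) gives $1/\phi_{k+1} \ge 1/\phi_k + 1/(2nL_{\max}R_0^2)$; telescoping and dropping $1/\phi_0$ yields the claimed bound \eqref{eq:1k}.

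For the linear rate under \eqref{eq:strong.convex}, I would instead minimize the right-hand side of the strong convexity inequality over $y$ to obtain the Polyak-\L ojasiewicz-type bound $\tfrac{1}{2}\|\nabla f(x^k)\|^2 \ge \sigma(f(x^k)-f^*)$. Plugging this into the conditional-expectation descent inequality and subtracting $f^*$ gives
\begin{equation*}
E_{i_k}[f(x^{k+1})] - f^* \le \Bigl(1 - \tfrac{\sigma}{nL_{\max}}\Bigr)\bigl(f(x^k)-f^*\bigr),
\end{equation*}
and taking total expectation and iterating produces \eqref{eq:rcd.linear}. The only real subtlety is the Jensen step in the sublinear case (the right-hand side of the one-step recursion is quadratic in $f(x^k)-f^*$, not linear), together with verifying that iterates stay in the level set so $R_0$ controls $\|x^k - x^*\|$; both are handled by the monotone decrease already established.
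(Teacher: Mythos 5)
Your proposal is correct and follows essentially the same route as the paper's proof: the coordinate descent lemma, conditional expectation over $i_k$, the convexity bound $f(x^k)-f^*\le R_0\|\nabla f(x^k)\|$ combined with Jensen and the telescoping of reciprocals for \eqref{eq:1k}, and the Polyak-type inequality from minimizing the strong-convexity bound for \eqref{eq:rcd.linear}. The only (immaterial) difference is that you apply Jensen to $(f(x^k)-f^*)^2$ after the pointwise gradient bound, whereas the paper applies it to $\|\nabla f(x^k)\|^2$ first and then bounds $E\|\nabla f(x^k)\|$ from below.
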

\begin{proof}
By application of Taylor's theorem, and using \eqref{eq:Li} and
\eqref{eq:Lmax}, we have
\begin{align}
\nonumber
f(x^{k+1}) &= f \left( x^k - \alpha_k [\nabla f(x^k)]_{i_k} e_{i_k} \right) \\
\nonumber
& \le f(x^k) - \alpha_k [\nabla f(x^k)]_{i_k}^2 + \frac12 \alpha_k^2 L_{i_k}
[\nabla f(x^k)]_{i_k}^2 \\
 \nonumber
& \le f(x^k) - \alpha_k \left( 1-\frac{\Lmax}{2} \alpha_k \right) [\nabla f(x^k)]_{i_k}^2 \\
 \label{eq:rcd.1}
& = f(x^k) - \frac{1}{2 \Lmax}  [\nabla f(x^k)]_{i_k}^2,
\end{align}
where we substituted the choice $\alpha_k = 1/\Lmax$ in the last
equality. Taking the expectation of both sides of this expression
over the random index $i_k$, we have
\begin{align}
\nonumber
E_{i_k} f(x^{k+1})  & \le f(x^k) - \frac{1}{2 \Lmax} \frac{1}{n} \sum_{i=1}^m [\nabla f(x^k)]_{i}^2  \\
\label{eq:rcd.2}
& =
f(x^k) - \frac{1}{2n\Lmax} \| \nabla f(x^k) \|^2.
\end{align}
(We used here the facts that $x^k$ does not depend on $i_k$, and that
$i_k$ was chosen from among $\{1,2,\dotsc,n\}$ with equal
probability.)  We now subtract $f(x^*)$ from both sides this
expression, take expectation of both sides with respect to {\em all}
random variables $i_0,i_1, \dotsc$, and use the notation
\begin{equation} \label{eq:phik}
\phi_k := E (f(x^k)) - f^*.
\end{equation}
to obtain
\begin{equation}
\phi_{k+1} \le \phi_k -  \frac{1}{2n\Lmax} E \left( \| \nabla f(x^k) \|^2 \right)  
\label{eq:rcd.3}
 \le \phi_k -  \frac{1}{2n \Lmax}  \left[ E ( \| \nabla f(x^k) \| ) \right]^2.
\end{equation}
(We used Jensen's Inequality in the second inequality.) By convexity
of $f$ we have for any $x^* \in \cS$ that
\[
f(x^k) - f^* \le \nabla f(x^k)^T(x^k-x^*) \le \| \nabla f(x^k) \| \| x^k - x^* \| \le R_0 \| \nabla f(x^k) \|,
\]
where the final inequality is because $f(x^k) \le f(x^0)$, so that
$x^k$ is in the level set in \eqref{eq:R0}. 
By taking expectations of both sides, we obtain
\[
E ( \| \nabla f(x^k) \| ) \ge \frac{1}{R_0} \phi_k.
\]
When we substitute this bound into \eqref{eq:rcd.3}, and rearrange, we
obtain
\[
\phi_k - \phi_{k+1} \ge \frac{1}{2n\Lmax}  \frac{1}{R_0^2} \phi_k^2.
\]
We thus have
\[
\frac{1}{\phi_{k+1}} - \frac{1}{\phi_k} = \frac{\phi_k-\phi_{k+1}}{\phi_k \phi_{k+1}} \ge \frac{\phi_k-\phi_{k+1}}{\phi_k^2}  \ge \frac{1}{2n\Lmax R_0^2}.
\]
By applying this formula recursively, we obtain
\[
\frac{1}{\phi_k} \ge \frac{1}{\phi_0} + \frac{k}{2n\Lmax R_0^2} \ge
\frac{k}{2n\Lmax R_0^2},
\]
so that \eqref{eq:1k} holds, as claimed.

In the case of $f$ strongly convex with modulus $\sigma>0$, we have by
taking the minimum of both sides with respect to $y$ in
\eqref{eq:strong.convex}, and setting $x=x^k$, that
\[
f^* \ge f(x^k) - \frac{1}{2 \sigma} \| \nabla f(x^k) \|^2.
\]
By using this expression to bound $\| \nabla f(x^k) \|^2$ in
\eqref{eq:rcd.3}, we obtain
\[
\phi_{k+1} \le \phi_k - \frac{\sigma}{n \Lmax} \phi_k = 
\left( 1-\frac{\sigma}{n \Lmax}
\right) \phi_k.
\]
Recursive application of this formula leads to \eqref{eq:rcd.linear}.
\end{proof}

Note that the same convergence expressions can be obtained for more
refined choices of steplength $\alpha_k$, by making minor adjustments
to the logic in \eqref{eq:rcd.1}. For example, the choice $\alpha_k =
1/L_{i_k}$ leads to the same bounds \eqref{eq:1k} and
\eqref{eq:rcd.linear}. The same bounds hold too when $\alpha_k$ is the
exact minimizer of $f$ along the coordinate search direction; we
modify the logic in \eqref{eq:rcd.1} for this case by taking the
minimum of all expressions with respect to $\alpha_k$, and use the
fact that $\alpha_k = 1/\Lmax$ is in general a suboptimal choice.

We can compare \eqref{eq:1k} with the corresponding result for
full-gradient descent with constant steplength $\alpha_k = 1/L$ (where
$L$ is from \eqref{eq:L}). The iteration 
\[
x^{k+1} = x^k - \frac{1}{L} \nabla f(x^k)
\]
leads to a convergence expression
\begin{equation} \label{eq:ssd}
f(x^k) - f^* \le \frac{2L R_0^2}{k}
\end{equation}
(see, for example, \cite{Nes04}). Since, as we have noted, $L$ can be
as large as $n \Lmax$, the bound in this expression may be equivalent
to \eqref{eq:1k} in extreme cases. More typically, these two Lipschitz
constants are comparable in size, and the appearance of the additional
factor $n$ in \eqref{eq:1k} indicates that we pay a price in terms of
slower convergence for using only one component of $\nabla f(x^k)$,
rather than the full vector.

Expected linear convergence rates have been proved under assumptions
weaker than strong convexity; see for example the ``essential strong
convexity'' property of \cite{LiuW13a}, the ``optimal strong
convexity'' property of \cite{LiuW14c}, the ``generalized error
bound'' property of \cite{NecC13a}, and \cite[Assumption~2]{TseY06},
which concerns linear growth in a measure of the gradient with
distance from the solution set.

A variant on Algorithm~\ref{alg:rcd} uses ``sampling without
replacement.'' Here the computation proceeds in ``epochs'' of $n$
consecutive iterations. At the start of each epoch, the set
$\{1,2,\dotsc, n\}$ is shuffled. The iterations then proceed
by setting $i_k$ to each entry in turn from the ordered set. This kind
of randomization has been shown in several contexts to be superior to
the sampling-with-replacement scheme analyzed above, but a
theoretical understanding of this phenomenon remains elusive.

\paragraph{Randomized Kaczmarz Algorithm.}

It is worth proving an expected linear convergence result for the
Kaczmarz iteration \eqref{eq:kac.w} for linear equations $Aw=b$ as a
separate, more elementary analysis. In one sense, the result is a
special case of Theorem~\ref{th:rcd} since, as we showed above, the
iteration \eqref{eq:kac.w} is obtained by applying
Algorithm~\ref{alg:rcd} to the dual formulation \eqref{eq:kac}. In
another sense, the result is stronger, since we obtain a linear rate
of convergence without requiring strong convexity of the objective
\eqref{eq:kac}, that is, the system $Aw=b$ is allowed to have multiple
solutions.

We denote by $\lambdaminnz$ the minimum nonzero eigenvalue of $AA^T$
and let $P(\cdot)$ denote projection onto the solution set of $Aw=b$.  We
have
\begin{align*}
\|w^{k+1}-P(w^{k+1})\|^2 & \le 
\| w^k - A_{i_k}^T (A_{i_k}w^k-b_{i_k}) - P(w^k) \|^2 \\
&= \frac12
\|w^k - P(w^k) \|^2 -  (A_{i_k} w^k - b_{i_k} )^2,
\end{align*}
where we have used normalization of the rows \eqref{eq:Anormalized}
and the fact that $A_{i_k} P(x^k) = b_{i_k}$.  By taking expectations
of both sides with respect to $i_k$, we have
\begin{align*}
E_{i_k}  \|w^{k+1}-P(w^{k+1})\|^2
&\le 
\|w^k - P(w^k) \|^2 -  E_{i_k} (A_{i_k} w^k - b_{i_k} )^2 \\
&= \frac12
\|w^k - P(w^k) \|^2 - \frac{1}{m} \|Aw^k -b \|^2  \\
& \le \left( 1- \frac{\lambdaminnz}{m} \right) \|w^k-P(w^k) \|^2.
\end{align*}
By taking expectations of both sides with respect to all random
variables $i_0,i_1, \dotsc$, and proceeding recursively, we obtain
\[
E \| w^k - P(w^k) \|^2  \le
 \left( 1- \frac{\lambdaminnz}{m} \right)^k  \| w^0-P(w^0) \|^2.
\]
(This analysis is slightly generalized from Strohmer and
Vershynin~\cite{Strohmer09} to allow for nonunique solutions of
$Aw=b$; see also \cite{LevL10a}.)

\subsection{Accelerated Randomized Algorithms}
\label{sec:accel}

The accelerated randomized algorithm, specified here as
Algorithm~\ref{alg:arcd}, was proposed by Nesterov~\cite{Nes10a}. It
assumes that an estimate is available of modulus of strong convexity
$\sigma \ge 0$ from \eqref{eq:strong.convex}, as well as estimates of
the component-wise Lipschitz constants $L_i$ from \eqref{eq:Li}. (The
algorithm remains valid if we simply use $\Lmax$ in place of
$L_{i_k}$ for all $k$.)

\begin{algorithm} 
\caption{Accelerated Randomized Coordinate Descent for \eqref{eq:f}\label{alg:arcd}}
\begin{algorithmic}
\State Choose $x^0 \in \R^n$;
\State Set $k \leftarrow 0$, $v^0 \leftarrow x^0$, $\gamma_{-1}\leftarrow 0$;
\Repeat
\State Choose $\gamma_k$ to be the larger root of 
\[
\gamma_k^2 - \frac{\gamma_k}{n} = \left( 1-\frac{\gamma_k \sigma}{n} \right)
\gamma_{k-1}^2.
\]
\State Set
\begin{equation} \label{eq:ab}
\alpha_k \leftarrow \frac{n-\gamma_k \sigma}{\gamma_k(n^2-\sigma)}, \quad
\beta_k \leftarrow 1-\frac{\gamma_k \sigma}{n};
\end{equation}
\State Set $y^k \leftarrow \alpha_k v^k + (1-\alpha_k) x^k$; 
\State Choose index $i_k \in \{1,2,\dotsc,n\}$ with uniform probability
and set $d^k = [\nabla f(y^k)]_{i_k} e_{i_k}$; 
\State Set $x^{k+1} \leftarrow y^k - (1/L_{i_k}) d^k$; 
\State Set $v^{k+1} \leftarrow  \beta_k v^k + (1-\beta^k) y^k - (\gamma_k/L_{i_k}) d^k$; 
\State $k \leftarrow k+1$;
\Until termination test satisfied;
\end{algorithmic}
\end{algorithm}

The approach is a close relative of the accelerated (full-)gradient
methods that have become extremely popular in recent years. These
methods have their origin in a 1983 paper of Nesterov~\cite{Nes83} and
owe much of their recent popularity to a recent incarnation known as
FISTA \cite{BecT08a} and an exposition in Nesterov's 2004
monograph~\cite{Nes04}, as well as ease of implementation and good
practical experience. In their use of momentum in the choice of step
--- the search direction combines new gradient information with the
previous search direction --- these methods are also related to such
other classical techniques as the heavy-ball method (see \cite{Pol87})
and conjugate gradient methods.

Nesterov~\cite[Theorem~6]{Nes10a} proves the following convergence
result for Algorithm~\ref{alg:arcd}.
\begin{theorem} \label{th:arcd}
Suppose that Assumption~\ref{ass:fconv} holds, and define 
\[
S_0 := \sup_{x^* \in \cS} \, \Lmax \| x^0-x^* \|^2 + (f(x^0)-f^*)/n^2.
\]
Then for all $k \ge 0$ we have
\begin{align}
\nonumber
E& (f(x^k))  - f^* \\
\label{eq:arcd.1}
 & \le S_0 \frac{\sigma}{\Lmax}  \left[ \left( 1+ \frac{\sqrt{\sigma/\Lmax}}{2n} \right)^{k+1} -
\left( 1- \frac{\sqrt{\sigma/\Lmax}}{2n} \right)^{k+1} \right]^{-2} \\
\label{eq:arcd.2}
& \le S_0 \left( \frac{n}{k+1} \right)^2.
\end{align}
\end{theorem}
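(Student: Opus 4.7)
My plan is to follow Nesterov's estimate-sequence / Lyapunov-function technique, adapted to the randomized coordinate-descent setting. The central object is a potential of the form
\begin{equation*}
\Psi_k := \gamma_{k-1}^2 \bigl(f(x^k) - f^*\bigr) + c_k \|v^k - x^*\|^2,
\end{equation*}
where $x^* \in \cS$ is fixed and $c_k > 0$ is a coefficient depending on $\gamma_{k-1}$, $\sigma$, and $\Lmax$, to be determined by matching terms in the calculations below. The target is to prove the in-expectation inequality $E_{i_k} \Psi_{k+1} \le \Psi_k$ for every $k$, so that $E \Psi_k \le \Psi_0$ (a quantity comparable to $S_0$), and then to convert a lower bound on $\gamma_{k-1}$ into the two rates \eqref{eq:arcd.1} and \eqref{eq:arcd.2}.

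Step one establishes one-step descent of $f$. From $x^{k+1} = y^k - (1/L_{i_k})[\nabla f(y^k)]_{i_k} e_{i_k}$, property \eqref{eq:Li} gives (by the same quadratic upper bound used in \eqref{eq:rcd.1})
\begin{equation*}
f(x^{k+1}) \le f(y^k) - \tfrac{1}{2L_{i_k}}[\nabla f(y^k)]_{i_k}^2,
\end{equation*}
and averaging over a uniformly chosen $i_k$ yields $E_{i_k} f(x^{k+1}) \le f(y^k) - \tfrac{1}{2n\Lmax}\|\nabla f(y^k)\|^2$. Step two tracks the auxiliary sequence: expanding $\|v^{k+1}-x^*\|^2$ from the $v$-update and taking expectation produces cross terms involving $\langle \nabla f(y^k),\,\beta_k v^k + (1-\beta_k) y^k - x^*\rangle$, which are then controlled via strong convexity \eqref{eq:strong.convex} with modulus $\sigma \ge 0$.

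Step three is the algebraic heart: weighting the $f$-descent bound by $\gamma_k^2$ and adding the $v$-bound, one verifies that the terms quadratic in $\nabla f(y^k)$ cancel and the linear ones telescope, provided $\alpha_k$, $\beta_k$, $\gamma_k$ obey exactly the relations \eqref{eq:ab} together with $\gamma_k^2 - \gamma_k/n = (1-\gamma_k\sigma/n)\gamma_{k-1}^2$. This matching of coefficients---which reveals the choices in \eqref{eq:ab} as the unique ones that render the potential monotonic---is the main obstacle of the proof: it requires careful bookkeeping, and is the step at which $\Lmax$ must dominate the per-coordinate $L_{i_k}$. Once $E\Psi_{k+1} \le \Psi_k$ is in hand, the argument concludes with a lower bound on $\gamma_{k-1}$. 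Viewed as a second-order-type recurrence in $1/\gamma_k$ (after normalization by $\Lmax$), the recursion admits characteristic roots $1 \pm \tfrac{1}{2n}\sqrt{\sigma/\Lmax}$, which directly produces the two-term expression in \eqref{eq:arcd.1}; the simpler bound \eqref{eq:arcd.2} then follows by elementary manipulation (equivalently, in the $\sigma = 0$ limit one shows $1/\gamma_k \ge 1/\gamma_{k-1} + 1/(2n)$, hence $\gamma_k \ge 2/(k+1)$).
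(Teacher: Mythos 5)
First, note that the paper offers no proof of this theorem: it is quoted directly from Nesterov \cite[Theorem~6]{Nes10a}, so the only meaningful comparison is with that source. Your plan is essentially Nesterov's own estimate-sequence argument --- a potential coupling $\gamma_{k-1}^2(f(x^k)-f^*)$ with a weighted distance $\|v^k-x^*\|^2$, one-step coordinate descent at $y^k$, and the relations \eqref{eq:ab} chosen so that the potential is a supermartingale --- so the approach is the right one. But as written it is a plan, not a proof: the step you yourself call ``the algebraic heart'' and ``the main obstacle'' (verifying that the terms quadratic in $\nabla f(y^k)$ cancel and the linear ones telescope under exactly \eqref{eq:ab}) is asserted rather than carried out, and that verification is the entire content of the theorem. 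Two points there are not routine. One: with nonuniform $L_i$, the cross term obtained by expanding $\|v^{k+1}-x^*\|^2$ and averaging over $i_k$ is $\frac1n\sum_i L_i^{-1}[\nabla f(y^k)]_i(\cdot)_i$, which is \emph{not} the Euclidean inner product $\langle\nabla f(y^k),\,\beta_k v^k+(1-\beta_k)y^k-x^*\rangle$ you invoke; Nesterov closes the argument by carrying an $L$-weighted norm throughout, and the unweighted potential works only if every $L_{i_k}$ is replaced by $\Lmax$ (consistent with the $\Lmax\|x^0-x^*\|^2$ in $S_0$). Two: you need the (strong) convexity inequality at $y^k$ evaluated both at $x^*$ and at $x^k$ --- the latter to recover the $\gamma_{k-1}^2(f(x^k)-f^*)$ term --- but only the first is mentioned.

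The concluding step also contains a concrete error. The recursion $\gamma_k^2-\gamma_k/n=(1-\gamma_k\sigma/n)\gamma_{k-1}^2$ is quadratic in $\gamma_k$, not a linear two-term recurrence, so it does not ``admit characteristic roots''; the lower bound $\gamma_k\ge c\bigl[(1+\tfrac{\sqrt{\sigma/\Lmax}}{2n})^{k+1}-(1-\tfrac{\sqrt{\sigma/\Lmax}}{2n})^{k+1}\bigr]$ must be established by a separate induction (it is a standalone lemma in \cite{Nes10a}). And your parenthetical for $\sigma=0$ runs in the wrong direction: from $\gamma_k^2-\gamma_{k-1}^2=\gamma_k/n$ and monotonicity one gets $\gamma_k-\gamma_{k-1}\ge\tfrac{1}{2n}$, hence $\gamma_k\ge\tfrac{k+2}{2n}$ \emph{grows} linearly, which is what \eqref{eq:arcd.2} needs; the claim ``$1/\gamma_k\ge 1/\gamma_{k-1}+1/(2n)$, hence $\gamma_k\ge 2/(k+1)$'' is both the wrong inequality and, starting from $\gamma_0=1/n$, false at $k=0$. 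With these repairs the plan does deliver \eqref{eq:arcd.1}--\eqref{eq:arcd.2}, but the repairs are the proof.
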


In the strongly convex case $\sigma>0$, the term $(1+
\sqrt{\sigma/\Lmax}/(2n))^{k+1}$ eventually dominates the second term
in brackets in \eqref{eq:arcd.1}, so that the linear convergence rate
suggested by this expression is significantly faster than the
corresponding rate \eqref{eq:rcd.linear} for
Algorithm~\ref{alg:rcd}. Essentially, the measure $\sigma/\Lmax$ of
conditioning in \eqref{eq:rcd.linear} is replaced by its square root
in \eqref{eq:arcd.1}, suggesting a decrease by a factor of
$\sqrt{\Lmax/\sigma}$ in the number of iterations required to meet a
specified error tolerance. In the sublinear rate bound
\eqref{eq:arcd.2}, which holds even for weakly convex $f$, the $1/k$
bound of \eqref{eq:1k} is replaced by a $1/k^2$ factor, implying a
reduction from $O(1/\epsilon)$ to $O(1/\sqrt{\epsilon})$ in the number
of iterations required to meet a specified error tolerance.

\subsection{Efficient Implementation of the Accelerated Algorithm}

\begin{algorithm} 
\caption{Accelerated Randomized Kaczmarz for \eqref{eq:Aw=b}, \eqref{eq:Anormalized}\label{alg:ark}}
\begin{algorithmic}
\State Choose $w^0 \in \R^n$;
\State Set $k \leftarrow 0$, $\tilde{v}^0 \leftarrow w^0$, $\gamma_{-1}\leftarrow 0$;
\Repeat
\State Choose $\gamma_k$ to be the larger root of 
\[
\gamma_k^2 - \frac{\gamma_k}{n} = \left( 1-\frac{\gamma_k \sigma}{n} \right)
\gamma_{k-1}^2.
\]
\State Set
\begin{equation} \label{eq:ab.ark}
\alpha_k \leftarrow \frac{n-\gamma_k \sigma}{\gamma_k(n^2-\sigma)}, \quad
\beta_k \leftarrow 1-\frac{\gamma_k \sigma}{n};
\end{equation}
\State Set $\tilde{y}^k \leftarrow \alpha_k \tilde{v}^k + (1-\alpha_k) w^k$; 
\State Choose index $i_k \in \{1,2,\dotsc,m\}$ with uniform probability
and set $\tilde{d}^k =  (A_{i_k} \tilde{y}^k - b_{i_k}) A_{i_k}^T$; 
\State Set $w^{k+1} \leftarrow \tilde{y}^k - \tilde{d}^k$; 
\State Set $\tilde{v}^{k+1} \leftarrow  \beta_k \tilde{v}^k + (1-\beta^k) \tilde{y}^k - \gamma_k \tilde{d}^k$; 
\State $k \leftarrow k+1$;
\Until termination test satisfied;
\end{algorithmic}
\end{algorithm}

One fact detracts from the appeal of accelerated CD methods over
standard methods: the higher cost of each iteration of
Algorithm~\ref{alg:arcd}. Both standard and accelerated variants
require calculation of one element of the gradient, but
Algorithm~\ref{alg:rcd} requires an update of just a single component
of $x$, whereas Algorithm~\ref{alg:arcd} also requires manipulation of
the generally dense vectors $y$ and $v$.  Moreover, the gradient is
evaluated at $x^k$ in Algorithm~\ref{alg:rcd}, where the argument
changes by only one component from the prior iteration, a fact that
can be exploited in several contexts. In Algorithm~\ref{alg:arcd}, the
argument $y^k$ for the gradient changes more extensively from one
iteration to the next, making it less obvious whether such economies
are available.  However, by using a change of variables due to Lee and
Sidford~\cite{LeeS13}, it is possible to implement the accelerated
randomized CD approach efficiently for problems with certain
structure, including the linear system $Aw=b$ and certain problems of
the form \eqref{eq:erm}.

We explain the Lee-Sidford technique in the context of the Kaczmarz
algorithm for \eqref{eq:Aw=b}, assuming normalization of the rows of
$A$ \eqref{eq:Anormalized}. As we explained in \eqref{eq:kac.w}, the
Kaczmarz algorithm is obtained by applying CD to the dual formulation
\eqref{eq:kac} with variables $x$, but operating in the space of
``primal'' variables $w$ using the transformation $w=A^Tx$. If we
apply the transformations $\tilde{v}^k = A^T v^k$ and $\tilde{y}^k =
A^T y^k$ to the other vectors in Algorithm~\ref{alg:arcd}, and use the
fact of normalization \eqref{eq:Anormalized} (and hence
$(AA^T)_{ii}=1$ for all $i=1,2,\dotsc,m$) to note that $L_i \equiv 1$
in \eqref{eq:Li}, we obtain Algorithm~\ref{alg:ark}.

When the matrix $A$ is dense, there is only a small factor of
difference between the per-iteration workload of the standard Kaczmarz
algorithm and its accelerated variant, Algorithm~\ref{alg:ark}. Both
require $O(m+n)$ operations per iteration. However, when $A$ is
sparse, the computational difference between the two algorithms
becomes substantial. At iteration $k$, the standard Kaczmarz algorithm
requires computation proportion to a small multiple of the number of
nonzeros in row $A_{i_k}$ (which we denote by $|A_{i_k}|$).
Meanwhile, iteration $k$ of Algorithm~\ref{alg:ark} requires
manipulation of the dense vectors $\tilde{v}^k$ and $\tilde{y}^k$ ---
both $O(n)$ processes --- and the benefits of sparsity are lost. This
apparent defect was partly remedied in \cite{LiuW13d} by ``caching''
the updates to these vectors, resulting in a number of cycles within
which updates gradually ``fill in.'' The more effective approach of
\cite{LeeS13} performs a change of variables from $\tilde{v}^k$ and
$\tilde{y}^k$ to two other vectors $\hat{v}^k$ and $\hat{y}^k$ that
     {\em can} be updated in $O(| A_{i_k}|)$ operations.  To describe
     this representation, we start by noting that if we substitute for
     $w^k$ and $w^{k+1}$ in the formulas of Algorithm~\ref{alg:ark},
     we obtain the updates to $\tilde{v}^k$ and $\tilde{y}^k$ in the
     following form:
\begin{equation} \label{eq:lees.2}
\left[ \begin{matrix} \tilde{v}^{k+1} & \tilde{y}^{k+1} \end{matrix} \right] =
\left[ \begin{matrix} \tilde{v}^{k} & \tilde{y}^{k} \end{matrix} \right] R_k 
- S_k,
\end{equation}
where
\begin{align*}
R_k & := \left[ \begin{matrix} \beta_k & \alpha_{k+1} \beta_k \\
(1-\beta_k)  & (1-\alpha_{k+1} \beta_k) \end{matrix} \right], \\
S_k & := (A_{i_k} \tilde{y}^k - b_{i_k}) A_{i_k}^T
\left[ \begin{matrix} \gamma_k & (1-\alpha_{k+1} + \alpha_{k+1} \gamma_k)
 \end{matrix} \right].
\end{align*}
Note that $R_k$ is a $2 \times 2$ matrix while $S_k$ is an $n \times
2$ matrix with nonzeros only in those rows for which $A_{i_k}^T$ has a
nonzero entry.  We define a change of variables based on another $2
\times 2$ matrix $B_k$, as follows:
\begin{equation} \label{eq:lees.1}
\left[ \begin{matrix} \tilde{v}^k & \tilde{y}^k \end{matrix} \right] =
\left[ \begin{matrix} \hat{v}^k & \hat{y}^k \end{matrix} \right] B_k,
\end{equation}
where we initialize with $B_0=I$. By substituting this representation
into \eqref{eq:lees.2}, we obtain
\[
\left[ \begin{matrix} \hat{v}^{k+1} & \hat{y}^{k+1} \end{matrix} \right] B_{k+1} =
\left[ \begin{matrix} \hat{v}^{k} & \hat{y}^{k} \end{matrix} \right] B_k R_k 
- S_k,
\]
so we can maintain validity of the representation \eqref{eq:lees.1} at
iteration $k+1$ by setting
\begin{equation} \label{eq:lees.3}
B_{k+1} := B_k R_k, \quad
\left[ \begin{matrix} \hat{v}^{k+1} & \hat{y}^{k+1} \end{matrix} \right]
:=
\left[ \begin{matrix} \hat{v}^{k} & \hat{y}^{k} \end{matrix} \right] - S_k B_{k+1}^{-1}.
\end{equation}
The computations in \eqref{eq:lees.3} can be performed in
$O(|A_{i_k}|)$ operations, and can replace the relatively expensive
computations of $\tilde{y}^k$ and $\tilde{v}^{k+1}$ in
Algorithm~\ref{alg:ark}. The only other operation of note in this
algorithm --- computation of $A_{i_k} \tilde{y}^k - b_{i_k}$ --- can
also be performed in $O(|A_{i_k}|)$ operations using the
$(\hat{v}^k,\hat{y}^k)$ representation, by noting from
\eqref{eq:lees.1} that
\[
A_{i_k} \tilde{y}^k = (A_{i_k} \hat{v}^k) (B_k)_{12} + (A_{i_k} \hat{y}^k) (B_k)_{22}.
\]

This efficient implementation can be extended to the dual empirical
risk minimization problem \eqref{eq:erm.dual} for certain choices of
regularization function $g(\cdot)$, for example, $g(z) = \|z\|^2/2$;
see \cite{LinLX14a}. As pointed out in \cite{LeeS13}, the key
requirement for the efficient scheme is that the gradient term
$[\nabla f(y^k)]_{i_k}$ can be evaluated efficiently after an update
to the two vectors in the alternative representation of $y^k$, and to
the two coefficients in this representation. Another variant of this
implementation technique appears in \cite[Section~5]{FerR13b}.

\subsection{Cyclic Variants}
\label{sec:cyclic}

We have the following result from \cite{BecT12a} for the cyclic variant
of Algorithm~\ref{alg:cd}.
\begin{theorem} \label{th:cd.cyclic}
Suppose that Assumption~\ref{ass:fconv} holds. Suppose that $\alpha_k
\equiv 1/\Lmax$ in Algorithm~\ref{alg:cd}, with the index $i_k$ at
iteration $k$ chosen according to the cyclic ordering \eqref{eq:cyc}
(with $i_0=1$). Then for $k=n,2n,3n, \dotsc$, we have
\begin{equation} \label{eq:cyc.1k}
f(x^{k}) - f^* \le \frac{4n \Lmax (1+n L^2 / \Lmax^2) R_0^2}{k+8}.
\end{equation}
When $\sigma>0$ in the strong convexity condition
\eqref{eq:strong.convex}, we have in addition for $k=n,2n,3n, \dotsc$
that
\begin{equation} \label{eq:cyc.linear}
 f(x^k)  - f^* \le \left( 1-\frac{\sigma}{2\Lmax(1+n L^2/\Lmax^2)}
\right)^{k/n} (f(x^0)-f^*).
\end{equation}
\end{theorem}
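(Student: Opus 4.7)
My plan is to mirror the structure of the randomized proof (Theorem~\ref{th:rcd}), but replace the expectation step (which converted $[\nabla f(x^k)]_{i_k}^2$ into $\|\nabla f(x^k)\|^2/n$ at the cost of a factor $n$) by a deterministic aggregation over a full cycle of $n$ iterations, at the cost of the factor $(1+nL^2/\Lmax^2)$. I would measure progress only at cycle endpoints $x^{jn}$ and set $\phi_j := f(x^{jn}) - f^*$.

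First, applying the same Taylor-expansion argument that produced \eqref{eq:rcd.1} (which is a purely deterministic, per-iteration bound) gives, for each $k$ in the cycle $jn \le k < (j+1)n$,
\[
f(x^k) - f(x^{k+1}) \ge \tfrac{1}{2\Lmax} [\nabla f(x^k)]_{i_k}^2.
\]
Summing this over a cycle and using the cyclic ordering (so that component $i$ is touched once, at iterate $x^{jn+i-1}$) yields
\[
\phi_j - \phi_{j+1} \ge \tfrac{1}{2\Lmax} \sum_{i=1}^n [\nabla f(x^{jn+i-1})]_i^2.
\]
The right-hand side is a sum of squared gradient components evaluated at \emph{different} iterates, whereas what I want to see on the right is $\|\nabla f(x^{jn})\|^2$.

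The main obstacle, and the source of the $(1+nL^2/\Lmax^2)$ factor, is to control the discrepancy between $\nabla f(x^{jn+i-1})$ and $\nabla f(x^{jn})$. Here I would use $[\nabla f(x^{jn})]_i = [\nabla f(x^{jn+i-1})]_i + ([\nabla f(x^{jn})]_i - [\nabla f(x^{jn+i-1})]_i)$, apply $(a+b)^2 \le 2a^2 + 2b^2$, invoke the full Lipschitz constant \eqref{eq:L} to get $\|\nabla f(x^{jn}) - \nabla f(x^{jn+i-1})\|^2 \le L^2 \|x^{jn} - x^{jn+i-1}\|^2$, and then expand
\[
\|x^{jn} - x^{jn+i-1}\|^2 = \tfrac{1}{\Lmax^2} \sum_{\ell=0}^{i-2} [\nabla f(x^{jn+\ell})]_{i_{jn+\ell}}^2
\]
since each update has size $[\nabla f(x^{jn+\ell})]_{i_{jn+\ell}}/\Lmax$ along an orthogonal coordinate direction. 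Summing over $i$ and collecting, the cross term is absorbed into $(nL^2/\Lmax^2)\sum_i [\nabla f(x^{jn+i-1})]_i^2$, producing the bound
\[
\|\nabla f(x^{jn})\|^2 \le 2(1 + nL^2/\Lmax^2) \sum_{i=1}^n [\nabla f(x^{jn+i-1})]_i^2,
\]
and therefore $\phi_j - \phi_{j+1} \ge \tfrac{1}{4\Lmax(1+nL^2/\Lmax^2)} \|\nabla f(x^{jn})\|^2$.

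From here the argument parallels the end of Theorem~\ref{th:rcd}. In the convex case, monotonicity of $\{f(x^k)\}$ keeps all iterates in the level set of \eqref{eq:R0}, so convexity gives $\phi_j \le R_0 \|\nabla f(x^{jn})\|$, hence
\[
\phi_j - \phi_{j+1} \ge \tfrac{1}{4\Lmax(1+nL^2/\Lmax^2) R_0^2} \phi_j^2.
\]
Dividing by $\phi_j \phi_{j+1}$, bounding $\phi_{j+1} \le \phi_j$, and telescoping from $j=0$ up to $j = k/n$ (exactly as in \eqref{eq:rcd.3} and the lines following it) produces \eqref{eq:cyc.1k}; the additive constant $8$ in the denominator can be obtained by a slightly more careful start-of-recursion estimate (bounding $1/\phi_0$ by splitting off a first ``burn-in'' cycle), which is the only nontrivial arithmetic detail. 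In the strongly convex case, substituting $\|\nabla f(x^{jn})\|^2 \ge 2\sigma \phi_j$ (the same consequence of \eqref{eq:strong.convex} used at the end of Theorem~\ref{th:rcd}) into the cycle inequality yields
\[
\phi_{j+1} \le \Bigl(1 - \tfrac{\sigma}{2\Lmax(1+nL^2/\Lmax^2)}\Bigr) \phi_j,
\]
and recursion gives \eqref{eq:cyc.linear}. The one subtle point to verify throughout is that adjacent-iterate function values are indeed monotone (so the level-set condition applies for every $x^k$, not just for the cycle endpoints), which follows directly from the per-step decrease bound above.
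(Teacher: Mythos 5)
Your proof is correct, but it takes a different route from the paper: the paper does not prove Theorem~\ref{th:cd.cyclic} directly at all, instead reducing it to Theorems~3.6 and 3.9 of Beck and Tetruashvili \cite{BecT12a} by identifying one iteration of their BCGD method with a cycle of $n$ coordinate steps, setting their block parameter $p$ to $n$, and taking $\bar{L}_{\max}=\bar{L}_{\min}=\Lmax$. What you have written is, in effect, a self-contained reconstruction of the core of that reference's argument specialized to single coordinates: the per-step decrease $f(x^k)-f(x^{k+1})\ge \tfrac{1}{2\Lmax}[\nabla f(x^k)]_{i_k}^2$ summed over a cycle, followed by the comparison inequality $\|\nabla f(x^{jn})\|^2 \le 2\bigl(1+nL^2/\Lmax^2\bigr)\sum_{i=1}^n [\nabla f(x^{jn+i-1})]_i^2$, which is exactly where the extra factor in \eqref{eq:cyc.1k} and \eqref{eq:cyc.linear} originates. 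I checked the details: the orthogonality of the within-cycle updates gives $\|x^{jn+i-1}-x^{jn}\|^2=\Lmax^{-2}\sum_{\ell=0}^{i-2}[\nabla f(x^{jn+\ell})]_{\ell+1}^2$ as you claim, the double sum is bounded by $n$ times the single sum, and the monotone decrease of $f$ keeps all iterates in the level set of \eqref{eq:R0}, so the convex and strongly convex endgames go through and reproduce the stated constants. The one loose end you flag, the additive $8$ in \eqref{eq:cyc.1k}, does resolve as you suggest: since $\phi_0\le LR_0^2/2$ and $n\Lmax(1+nL^2/\Lmax^2)\ge n^2 L$, the telescoped bound $1/\phi_{k/n}\ge 1/\phi_0 + (k/n)c$ already yields a denominator of at least $k+8$. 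The trade-off between the two approaches is the usual one: the citation buys brevity and inherits the more general block/weighted-norm statement of \cite{BecT12a}, while your argument buys transparency, makes visible why the cyclic rate degrades by the factor $1+nL^2/\Lmax^2$ relative to the randomized rate of Theorem~\ref{th:rcd}, and parallels the paper's own proof of that randomized result almost line by line.
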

\begin{proof}
The result \eqref{eq:cyc.1k} follows from Theorems~3.6 and 3.9 in
\cite{BecT12a} when we note that (i) each iteration of Algorithm BCGD
in \cite{BecT12a} corresponds to a ``cycle'' of $n$ iterations in
Algorithm~\ref{alg:cd}; (ii) we update coordinates rather than blocks,
so that the parameter $p$ in \cite{BecT12a} is equal to $n$; (iii) we
set $\bar{L}_{\max}$ and $\bar{L}_{\min}$ in \cite{BecT12a} both to
$\Lmax$.
\end{proof}

Comparing the complexity bounds for the cyclic variant with the
corresponding bounds proved in Theorem~\ref{th:rcd} for the randomized
variant, we see that since $L \ge \Lmax$ in general, the numerator in
\eqref{eq:cyc.1k} is $O(n^2)$, in contrast to $O(n)$ term in
\eqref{eq:1k}.  A similar factor of $n$ in seen in comparing
\eqref{eq:rcd.linear} to \eqref{eq:cyc.linear}, when we note that
$(1-\epsilon)^{1/n} \approx 1-\epsilon/n$ for small values of
$\epsilon$. The bounds in Theorem~\ref{th:cd.cyclic} are
deterministic, however, rather than being bounds on expected
nonoptimality, as in Theorem~\ref{th:rcd}.

We noted in Subsection~\ref{sec:assumptions} that the ratio $L/\Lmax$
lies in the interval $[1,n]$ when $f$ is a convex quadratic function
and both parameters are set to their best values.  Lower values of
this ratio are attained on functions that are ``more decoupled'' and
larger values attained when there is a greater dependence between the
coordinates. Larger values lead to weaker bounds in
Theorem~\ref{th:cd.cyclic}, which accords with our intuition; we
expect CD methods to require more iterations to resolve the coupling
of the coordinates. 

We are free to make other, larger choices of $\Lmax$; they need only
satisfy the conditions \eqref{eq:Li} and \eqref{eq:Lmax}. Larger
values of $\Lmax$ lead to shorter steps $\alpha_k=1/\Lmax$ and
different complexity expressions. For $\Lmax=L$, for example, the
bound in \eqref{eq:cyc.1k} becomes
\[
\frac{4n(n+1) L R_0^2}{k+8},
\]
which is worse by a factor of approximately $2n^2$ than the bound
\eqref{eq:ssd} for the full-step gradient descent approach. For
$\Lmax=\sqrt{n}L$, we obtain
\[
\frac{8n^{3/2} L R_0^2}{k+8},
\]
which still trails \eqref{eq:ssd} by a factor of $4n^{3/2}$.

\subsection{Extension to Separable Regularized Case} \label{sec:sepreg}

In this section we consider the separable regularized formulation
\eqref{eq:reg}, \eqref{eq:reg.sep} where $f$ is smooth and strongly
convex, and each $\Omega_i$, $i=1,2,\dotsc,n$ is convex. We prove a
result similar to the second part of Theorem~\ref{th:rcd} for a
randomized version of Algorithm~\ref{alg:cdreg}. The proof is a
simplified version of the analysis from \cite{RicT11a}. It makes use
of the following assumption.

\begin{assumption} \label{ass:hconv}
The function $f$ in \eqref{eq:reg} is uniformly Lipschitz continuously
differentiable and strongly convex with modulus $\sigma>0$ (see
\eqref{eq:strong.convex}). The functions $\Omega_i$,
$i=1,2,\dotsc,n$ are convex. The function $h$ in \eqref{eq:reg} attains its
minimum value $h^*$ at a unique point $x^*$.
\end{assumption} 

Our result uses the coordinate Lipschitz constant $\Lmax$ for $f$, as
defined in \eqref{eq:Lmax}. Note that the modulus of convexity
$\sigma$ for $f$ is also the modulus of convexity for $h$. By
elementary results for convex functions, we have
\begin{equation} \label{eq:h.strongly.convex}
h(\alpha x + (1-\alpha)y) \le \alpha h(x) + (1-\alpha) h(y) - \frac12
\sigma \alpha (1-\alpha) \|x-y \|^2.
\end{equation}

\begin{theorem} \label{th:rcd.reg} 
Suppose that Assumption~\ref{ass:hconv} holds. Suppose that the
indices $i_k$ in Algorithm~\ref{alg:cdreg} are chosen independently
for each $k$ with uniform probability from $\{1,2,\dotsc,n\}$, and
that $\alpha_k \equiv 1/\Lmax$. Then for all $k \ge 0$, we have
\begin{equation} \label{eq:rcd.reg.linear}
E \left( h(x^k) \right) - h^* \le \left( 1-\frac{\sigma}{n \Lmax}
\right)^k (h(x^0)-h^*).
\end{equation}
\end{theorem}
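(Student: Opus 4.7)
The plan is to imitate the randomized convergence argument of Theorem~\ref{th:rcd}, but to replace the Taylor-style upper bound on $f(x^{k+1})$ by a bound on $h(x^{k+1})$ that exploits the optimality of the shrink subproblem. First I would bound the decrease in $f$ along the single coordinate $i_k$ using \eqref{eq:Li} and \eqref{eq:Lmax}, yielding
\[
f(x^{k+1}) \le f(x^k) + [\nabla f(x^k)]_{i_k}(z^k_{i_k}-x^k_{i_k}) + \frac{\Lmax}{2}(z^k_{i_k}-x^k_{i_k})^2.
\]
Adding the separable regularizer terms $\lambda\Omega_{i_k}(z^k_{i_k})$ and $\lambda\Omega_j(x^k_j)$ for $j\neq i_k$ to both sides converts this into an inequality on $h(x^{k+1})$, whose right-hand side is exactly the objective of the shrink subproblem defining $z^k_{i_k}$ (with $\alpha_k=1/\Lmax$), evaluated at the minimizer $\chi=z^k_{i_k}$.

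The key step is now to upper-bound this minimum by plugging in the feasible point $\chi_\alpha := (1-\alpha) x^k_{i_k}+\alpha x^*_{i_k}$ for a parameter $\alpha\in[0,1]$ to be chosen. Convexity of $\Omega_{i_k}$ gives $\Omega_{i_k}(\chi_\alpha) \le (1-\alpha)\Omega_{i_k}(x^k_{i_k})+\alpha\Omega_{i_k}(x^*_{i_k})$, so after this substitution and taking expectation over the uniform choice of $i_k$, I would obtain
\[
E_{i_k} h(x^{k+1}) \le h(x^k) + \frac{\alpha}{n}\nabla f(x^k)^T(x^*-x^k) + \frac{\alpha^2\Lmax}{2n}\|x^*-x^k\|^2 + \frac{\lambda\alpha}{n}[\Omega(x^*)-\Omega(x^k)].
\]

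Next I would invoke strong convexity of $f$ with modulus $\sigma$ at $x^k$ to get $\nabla f(x^k)^T(x^*-x^k) \le f(x^*)-f(x^k) - \frac{\sigma}{2}\|x^*-x^k\|^2$. Combining the $f$ and $\Omega$ pieces turns the first-order terms into $\frac{\alpha}{n}(h^*-h(x^k))$ and leaves a coefficient $\frac{\alpha}{2n}(\alpha\Lmax-\sigma)$ in front of $\|x^*-x^k\|^2$. Choosing $\alpha := \sigma/\Lmax \in (0,1]$ cancels that quadratic term exactly, yielding
\[
E_{i_k} h(x^{k+1}) - h^* \le \Bigl(1-\frac{\sigma}{n\Lmax}\Bigr)(h(x^k)-h^*).
\]
Taking total expectation and iterating in $k$ (as in the last part of the proof of Theorem~\ref{th:rcd}) gives \eqref{eq:rcd.reg.linear}.

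The main obstacle is the handling of the nonsmooth regularizer: we cannot expand $h$ by Taylor's theorem, so the whole argument hinges on using the subproblem-optimality of $z^k_{i_k}$ together with the convex-combination test point $\chi_\alpha$ to simultaneously exploit the coordinate-wise smoothness of $f$ and the convexity of $\Omega_{i_k}$. Once that substitution is set up, the choice $\alpha=\sigma/\Lmax$ is essentially forced by the need to kill the $\|x^*-x^k\|^2$ term, and the remainder is bookkeeping.
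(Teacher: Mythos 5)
Your proposal is correct and follows essentially the same route as the paper's proof: both use the coordinate Lipschitz bound to dominate $h(x^{k+1})$ by the value of the shrink subproblem at its minimizer, substitute the test point $(1-\alpha)x^k+\alpha x^*$ with $\alpha=\sigma/\Lmax$ chosen to cancel the $\|x^k-x^*\|^2$ term via strong convexity, and then take expectations and iterate. The only difference is presentational: the paper aggregates the per-coordinate subproblems into a single surrogate $H(x^k,\cdot)$ over the full vector $z$ and invokes strong convexity of $h$ in the form \eqref{eq:h.strongly.convex}, whereas you carry out the same substitution coordinate-by-coordinate, applying convexity of $\Omega_i$ and the gradient form of strong convexity of $f$ separately.
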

\begin{proof}
Define the function
\[
H(x^k,z) := f(x^k) + \nabla f(x^k)^T(z-x^k) + \frac12 \Lmax \| z-x^k\|^2 +
\lambda \Omega(z),
\]
and note that this function is separable in the components of $z$, and
attains its minimum over $z$ at the vector $z^k$ whose $i_k$ component
is defined in Algorithm~\ref{alg:cdreg}. 
Note by  strong convexity \eqref{eq:strong.convex} that
\begin{align} 
\nonumber
H(x^k,z) & \le f(z) - \frac12 \sigma \|z-x^k\|^2 + \frac12 \Lmax \| z-x^k\|^2 + \lambda \Omega(z) \\
\label{eq:zig.1}
& = h(z) + \frac12 (\Lmax-\sigma) \| z-x^k\|^2.
\end{align}
We have by minimizing both sides over $z$ in this expression that
\begin{align}
\nonumber
H(x^k,z^k) &= \min_z \, H(x^k,z) \\
\nonumber
& \le \min_z \, h(z) + \frac12 (\Lmax-\sigma) \| z-x^k\|^2                        \\
\nonumber
 & \le \min_{\alpha \in [0,1]} \, h(\alpha x^* + (1-\alpha) x^k) +
\frac12 (\Lmax-\sigma) \alpha^2 \|x^k-x^* \|^2                                    \\
\nonumber
 & \le \min_{\alpha \in [0,1]} \, \alpha h^* + (1-\alpha) h(x^k) + \frac12
\left[ (\Lmax-\sigma) \alpha^2 - \sigma \alpha (1-\alpha) \right] \| x^k-x^* \|^2 \\
\label{eq:zig.2}
 & \le \frac{\sigma}{\Lmax} h^* + \left( 1-\frac{\sigma}{\Lmax} \right) h(x^k),
\end{align}
where we used \eqref{eq:zig.1} for the first inequality,
\eqref{eq:h.strongly.convex} for the third inequality, and the
particular value $\alpha = \sigma/\Lmax$ for the fourth inequality
(for which value the coefficient of $\|x^k-x^* \|^2$ vanishes).
Taking the expected value of $h(x^{k+1})$ over the index $i_k$, we
have
\begin{align*}
E_{i_k} h(x^{k+1}) &= \frac{1}{n} \sum_{i=1}^n 
\left[ f(x^k+ (z^k_i - x^k_i)e_i) + \lambda \Omega_i(z^k_i) + \lambda
\sum_{j \neq i} \Omega_j(x^k_j) \right] \\
& \le \frac{1}{n} \sum_{i=1}^n \left\{ f(x^k) + [\nabla f(x^k)]_i (z^k_i-x^k_i)
+ \frac12 \Lmax (z^k_i-x^k_i)^2 \right. \\
& \quad\quad\quad\quad\quad\quad\quad\quad\quad\quad\quad\quad 
 \left. + \lambda \Omega_i(z^k_i) + \lambda \sum_{j \neq i} \Omega_j(x^k_j) \right\} \\
& = \frac{n-1}{n} h(x^k) + \frac{1}{n} \left[ f(x^k) + \nabla f(x^k)^T(z^k-x^k) \right. \\
& \quad\quad\quad\quad\quad\quad\quad\quad\quad\quad  +
\left. \frac12 \Lmax \| z^k-x^k\|^2 + \lambda \Omega(z^k) \right] \\
&= \frac{n-1}{n} h(x^k) + \frac{1}{n} H(x^k,z^k).
\end{align*}
By subtracting $h^*$ from both sides of this expression, and
using  \eqref{eq:zig.2} to substitute for $H(x^k,z^k)$, we obtain
\[
E_{i_k} h(x^{k+1}) - h^* \le \left( 1-\frac{\sigma}{n\Lmax} \right) (h(x^k)-h^*).
\]
By taking expectations of both sides of this expression with respect to the
random indices $i_0,i_1,i_2, \dotsc, i_{k-1}$, we obtain
\[
E( h(x^{k+1})) - h^* \le \left( 1-\frac{\sigma}{n\Lmax} \right) (E(h(x^k))-h^*).
\]
The result follows from a recursive application of this formula.
\end{proof}

A result similar to \eqref{eq:1k} can be proved for the case in which
$f$ is convex but not strongly convex, but there are a few technical
complications, and we refer the reader to \cite{RicT11a} for details.

An extension of the fixed-step approach to separable composite
objectives \eqref{eq:reg}, \eqref{eq:reg.sep} with {\em nonconvex}
smooth part $f$ is discussed in \cite{PatN13a}, where it is shown that
accumulation points of the sequence of iterates are stationary and
that a measure of optimality decreases to zero at a sublinear ($1/k$)
rate.

\subsection{Computational Notes}
\label{sec:computation}

A full computational comparison between variants of CD (and between CD
and other methods) is beyond the scope of this paper. Nevertheless it
is worth asking whether various aspects of the convergence analysis
presented above --- in particular, the distinction between CD variants
--- can be observed in practice. To this end, we used these methods to
minimize a convex quadratic $f(x)=(1/2) x^TQx$ (with $Q$ symmetric and
positive semidefinite) for which $x^*=0$ and $f^*=0$. We constructed $Q$
by choosing an integer $r$ from $1,2,\dotsc,n$ and parameters $\eta
\in [0,1]$ and $\zeta>0$, and defining
\begin{subequations}
\begin{align} \label{eq:Q}
Q & := V_{r,\eta} \Sigma V_{r,\eta}^T + \zeta \bfone \bfone^T, \\
V_{r,\eta} & := \eta V+(1-\eta) E_r, \\
 E_r &:= [I_{r \times r} \, | \, 0_{r \times (n-r)}]^T.
\end{align}
\end{subequations}
where $V \in \R^{n \times r}$ is a random matrix with $r \le n$
orthogonal columns, $\Sigma$ is an $r \times r$ positive diagonal
matrix whose diagonal elements were chosen from a log-uniform
distribution to have a specified condition number (with maximum
diagonal of $1$), and $\bfone$ is the vector $(1,1,\dotsc,1)^T$. For
convenience, we normalized $Q$ so that its maximum diagonal --- and
thus $\Lmax$ \eqref{eq:Lmax} --- is $1$.

By choosing $\eta$ and $\zeta$ appropriately, we can obtain a range of
values for the quantities described in
Subsection~\ref{sec:assumptions}, which enter along with the smallest
singular value into the convergence expression. For example, by
setting $\zeta=0$ and $\eta=0$ we obtain a randomly oriented matrix,
possibly singular, with a specified range of nonzero
eigenvalues. Nonzero values of $\eta$ and $\zeta$ induce different
types of orientation bias. In particular, we see that $\Lambda$
\eqref{eq:def:Lambda} increases toward its upper bound of $\sqrt{n}$
as $\zeta$ increases away from zero.

We tested three CD variants.
\begin{itemize}
\item CYCLIC: Cyclic CD, described in  Subsection~\ref{sec:cyclic}.
\item IID: Randomized CD using sampling with replacement:
  Algorithm~\ref{alg:rcd}.
\item EPOCHS: The ``sampling without replacement'' variant of
  Algorithm~\ref{alg:rcd}, described following the proof of
  Theorem~\ref{th:rcd}.
\end{itemize}

For each variant, we tried both a fixed steplength $\alpha_k \equiv
1/\Lmax$ and the optimal steplength $\alpha_k = 1/Q_{i_k,i_k}$. Thus,
there were a total of six algorithmic variants tested.

The starting point $x^0$ was chosen randomly, with all components from
the unit normal distribution $N(0,1)$. The algorithms were terminated
when the objective was reduced by a factor of $10^{-6}$ over its
initial value $f(x^0)$. 

The speed of convergence varied widely according to the problem
construction parameters $\eta$, $\lambda$, and $\cond(\Sigma)$, but we
can make some general observations.  First, on problems that are not
well conditioned, the function values $f(x^k)$ decreased rapidly at
first, then settled into a {\em linear} rate of decrease. This linear
rate held even for problems in which $Q$ was singular --- a
significant improvement over the sublinear rates predicted by the
theory. Second, the EPOCHS variant of randomized CD tended to converge
faster than the IID version, though rarely more than twice as
fast. Third, the use of the optimal step was usually better than the
fixed step (with sometimes up to six times fewer iterations), but this
was by no means always the case. Fourth, while there were extensive
regimes of parameter values in which all six variants performed
similarly, there were numerous ``stressed'' settings in which the
CYCLIC variants are much slower than the randomized variants, by
factors of $10$ or more.

\section{Parallel CD Algorithms} 
\label{sec:parallel}

CD methods lend themselves to different kinds of parallel
implementation. Even basic algorithm frameworks such as
Algorithm~\ref{alg:cd} may be amenable to application-specific
parallelism, when the computations involved in evaluating a single
element of the gradient vector are substantial enough to be spread out
across cores of a multicore computer. We concern ourselves here with
more generic forms of parallelism, which involve multiple instances of
the basic CD algorithm, running in parallel on multiple processors.

We can distinguish different types of parallel CD algorithms. {\em
  Synchronous} algorithms are those that partition the computation
into pieces that can be executed in parallel on multiple processors
(or cores of a multicore machine), but that synchronize frequently
across all processors, to ensure consistency of the information
available to all processors at certain points in time. For example,
each processor could update a subset of components of $x$ in parallel
(with the subsets being disjoint), and the synchronization step could
ensure that the results of all updates are shared across all
processors before further computation occurs.  The synchronization
step often detracts from the performance of algorithms, not only
because some processors may be forced to idle while others complete
their work, but also because the overheads associated with (hardware
and software) locking of memory accesses can be high. Thus, {\em
  asynchronous} methods, which weaken or eliminate the requirement of
consistent information across processors, are preferred in practice.
Analysis of such methods is more difficult, but results have been
obtained that accord with practical experience of such
methods. Indeed, it can be verified that in certain regimes, linear
speedup can be expected across a modest number of processors.

\subsection{Synchronous Parallelism} \label{sec:parallel.sync}

We mention several synchronous parallel variants of CD that appear in
the recent literature. We note that in the some of these papers, the
computational results were obtained by implementing the methods in an
asynchronous fashion, disregarding the synchronization step required
by the analysis.

Bradley at al.~\cite{Bra11a} consider a bound-constrained problem that
is a reformulation of the problem \eqref{eq:reg} with specific choices
of $f$ and with $\Omega(x) = \| x\|_1$. Their algorithm performs
short-step updates of individual components of $x$ in parallel on $P$
processors, with synchronization after each round of parallel
updating. This scheme essentially updates a randomly-chosen block of
$P$ variables at each cycle. By modifying the analysis of
\cite{ShaT11a}, they show that the $1/k$ sublinear convergence rate
bound is not affected provided that $P$ is no larger than $n/L$, where
$L$ is the Lipschitz constant from \eqref{eq:L}.

Jaggi et al.~\cite{Jag14d} perform a synchronized CD method on the
dual ERM model \eqref{eq:erm.dual} for the case of $g(w) = g^*(w) =
(1/2) \|w\|^2$, partitioning components of the dual variable $x$
between cores and sharing a copy of the vector $Ax$ across cores,
updating this vector at each synchronization point. The approach can
be thought of as a nonlinear block Gauss-Jacobi method (by contrast
with the coordinate Gauss-Seidel approaches discussed in
Section~\ref{sec:algs}).

Richtarik and Takac~\cite{RicT12c} describe a method for the separably
regularized formulation \eqref{eq:reg}, \eqref{eq:reg.sep} in which a
subset of indices $S_k \subset \{1,2,\dotsc,n\}$ is updated according
to the formula in Algorithm~\ref{alg:cdreg}. The work of updating the
components in $S_k$ is divided between processors; essentially, a
synchronization step takes place at each iteration. This scheme is
enhanced with an acceleration step in \cite{FerQRT14}; the extra
computations associated with the acceleration step too are
parallelized, using ideas from \cite{LeeS13}.  In the scheme of
Marecek, Richtarik, and Takac~\cite{MarRT14a}, the variable vector $x$
is partitioned into subvectors, and each processor is assigned the
responsibility for updating one of these subvectors.  On each
processor, the updating scheme described in \cite{RicT12c} is applied,
providing a second level of parallelism. Synchronization takes place
at each outer iteration. Details of the information-sharing between
processors required for accurate computation of gradients in different
applications are described in \cite[Section~6]{MarRT14a}.

\subsection{Asynchronous Parallelism} \label{sec:parallel.async}

In asynchronous variants of CD, the variable vector $x$ is assumed to
be accessible to each processor, available for reading and
updating. (For example, $x$ could be stored in the shared-memory space
of a multicore computer, where each core is viewed as a processor.)
Each processor runs its own CD process, shown here as
Algorithm~\ref{alg:pcd.1}, without any attempt to coordinate or
synchronize with other processors.  Each iteration on each processor
chooses an index $i$, loads the components of $x$ that are needed to
compute the gradient component $[\nabla f(x)]_{i}$, then updates the
$i$th component $x_i$. Note that this evaluation may need only a small
subset of the components of $x$; this is the case when the Hessian
$\nabla^2 f$ is structurally sparse, for example. On some multicore
architectures (for example, the Intel Xeon), the update of $x_i$ can
be performed as a unitary operation; no software or hardware locking
is required to block access of other cores to the location $x_i$.

\begin{algorithm} 
\caption{Coordinate Descent for \eqref{eq:f} (running on each Processor)\label{alg:pcd.1}}
\begin{algorithmic}
\Repeat
\State Choose index $i \in \{1,2,\dotsc,n\}$;
\State Evaluate $[\nabla f(x)]_{i}$, reading  components of $x$ from shared memory as necessary;
\State Update $x_i \leftarrow x_i - \alpha [\nabla f(x)]_{i}$ for some $\alpha>0$;
\Until termination;
\end{algorithmic}
\end{algorithm}

We can take a global view of the entire parallel process, consisting
of multiple processors each executing Algorithm~\ref{alg:pcd.1}, by
defining a global counter $k$ that is incremented whenever {\em any}
processor updates an element of $x$: see Algorithm~\ref{alg:pcd.2}. Note
that the {\em only} difference with the basic framework of
Algorithm~\ref{alg:cd} is in the argument of the gradient component:
In Algorithm~\ref{alg:cd} this is the latest iterate $x^k$ whereas in
Algorithm~\ref{alg:pcd.2} it is a vector $\hat{x}^k$ that is generally
made up of components of vectors from previous iterations $x^j$,
$j=0,1,\dotsc,k$. The reason for this discrepancy is that between the
time at which a processor {\em reads} the vector $x$ from shared
storage in order to calculate $[\nabla f(x)]_i$, and the time at which
it {\em updates} component $i$, {\em other processors} have generally
made changes to $x$. In consequence, each update step is using
slightly stale information about $x$. To prove convergence results, we
need to make assumptions on how much ``staleness'' can be tolerated,
and to modify the convergence analysis quite substantially. Indeed,
proofs of convergence even for the most basic asynchronous algorithms
are quite technical.

\begin{algorithm} 
\caption{Asynchronous Coordinate Descent for \eqref{eq:f}\label{alg:pcd.2}}
\begin{algorithmic}
\State Set $k \leftarrow 0$ and choose $x^0 \in \R^n$;
\Repeat
\State Choose index $i_k \in \{1,2,\dotsc,n\}$;
\State $x^{k+1} \leftarrow x^k - \alpha_k [\nabla f(\hat{x}^k)]_{i_k} e_{i_k}$ for some $\alpha_k>0$;
\State $k \leftarrow k+1$;
\Until termination test satisfied;
\end{algorithmic}
\end{algorithm}

Asynchronous CD algorithms are distinguished from each other mostly by
the assumptions they make on the the choice of update components $i_k$
and on the ``ages'' of the components of $\hat{x}^k$, that is, the
iterations at which each component of this vector was last updated. In
the terminology of Bertsekas and Tsitsiklis~\cite{BerT89}, the
algorithm is {\em totally asynchronous} if
\begin{itemize}
\item[(a)] each index $i \in
\{1,2,\dotsc,n\}$ of $x$ is updated at infinitely many iterations; and
\item[(b)] if $\nu^k_j$ denotes the iteration at which component $j$ of
the vector $\hat{x}^k$ was last updated, then $\nu^k_j \to \infty$ as $k
\to \infty$ for all $j=1,2,\dotsc,n$.
\end{itemize}
In other words, each component of $x$ is updated infinitely often, and
all components used in successive evaluation vectors $\hat{x}^k$ are
also updated infinitely often. 

The following convergence result for totally asynchronous variants of
Algorithm~\ref{alg:pcd.2} is due to Bertsekas and Tsitsiklis; see in
particular \cite[Sections~6.1, 6.2, and 6.3.3]{BerT89}.
\begin{theorem} \label{th:async}
Suppose that the problem \eqref{eq:f} has a unique solution $x^*$ and
that $f$ is convex and continuously differentiable. Suppose that
Algorithm~\ref{alg:pcd.2} is implemented in a totally asynchronous
fashion. Suppose that the mapping $T$ defined by $T(x) := x-\alpha
\nabla f(x)$ for some $\alpha>0$ (for which $x^*$ is the unique fixed
point) is strictly contractive in the $\ell_{\infty}$ norm, that is,
\begin{equation} \label{eq:contraction.max}
\| T(x) - x^* \|_{\infty} \le \eta \| x-x^*\|_{\infty}, \quad
\mbox{for some $\eta \in (0,1)$.}
\end{equation}
Then if we set $\alpha_k \equiv \alpha$ in Algorithm~\ref{alg:pcd.2},
the sequence $\{ x^k \}$ converges to $x^*$.
\end{theorem}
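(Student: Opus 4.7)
The plan is to apply the classical Bertsekas--Tsitsiklis ``asynchronous convergence theorem'' based on a nested sequence of boxes centered at $x^*$. Since the assumed contraction in \eqref{eq:contraction.max} is with respect to the $\ell_\infty$ norm, for each $s \ge 0$ I would define the hyperrectangle
\[
X(s) := \{ x \in \R^n : \| x - x^* \|_\infty \le \eta^s \| x^0 - x^* \|_\infty \},
\]
which decomposes as a Cartesian product of scalar intervals $X_1(s) \times \dots \times X_n(s)$ around $x^*_1, \dots, x^*_n$, satisfies $X(s+1) \subset X(s)$, and shrinks as $\bigcap_s X(s) = \{x^*\}$. The contraction hypothesis gives the ``synchronous convergence'' property $T(X(s)) \subset X(s+1)$, and because $X(s)$ is a product set, the componentwise update $x_i \leftarrow T_i(x)$ applied to any $x \in X(s)$ produces a new vector still in $X(s)$, with $i$-th entry landing in the tighter interval $X_i(s+1)$.

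Next I would construct inductively a nondecreasing sequence $0 = K(0) \le K(1) \le K(2) \le \dots$ such that $x^k \in X(s)$ for every $k \ge K(s)$. The base case is immediate since $x^0 \in X(0)$. For the inductive step, assume $x^k \in X(s)$ for all $k \ge K(s)$. By total asynchrony condition (b), choose $K'(s) \ge K(s)$ large enough that $\nu^k_j \ge K(s)$ for every $k \ge K'(s)$ and every component $j$; then each entry of the read vector $\hat{x}^k$ was last written at an iterate lying in $X(s)$, so the product structure of the box yields $\hat{x}^k \in X(s)$. Hence for $k \ge K'(s)$ the update $x^{k+1}_{i_k} = T_{i_k}(\hat{x}^k)$ lies in $X_{i_k}(s+1)$, while all other components of $x^{k+1}$ retain values that are already in their $X_j(s)$ intervals. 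By total asynchrony condition (a) each coordinate is visited infinitely often, so there exists $K(s+1) \ge K'(s)$ by which every coordinate has been updated at least once based on an $X(s)$-valid read; from that point on each component of $x^k$ lies in its tighter interval $X_j(s+1)$, i.e.\ $x^k \in X(s+1)$. Taking $s \to \infty$ and using $\bigcap_s X(s) = \{x^*\}$ gives $x^k \to x^*$.

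The main technical obstacle is the careful bookkeeping around the staleness of $\hat{x}^k$: condition (b) is precisely what lets me wait long enough that every stored component of $\hat{x}^k$ originates from an iterate already in $X(s)$, and condition (a) is what lets me wait further until each coordinate has been individually refreshed by an update computed from such a valid read, so that the tighter box $X(s+1)$ is actually populated componentwise. The $\ell_\infty$ hypothesis in \eqref{eq:contraction.max} is essential here, since the product structure of its sublevel sets is exactly what makes componentwise, uncoordinated updates compatible with the contraction; a contraction only in the Euclidean norm would not support this argument.
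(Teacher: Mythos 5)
Your argument is correct and is essentially the proof the paper relies on: the paper does not prove Theorem~\ref{th:async} itself but defers to Bertsekas and Tsitsiklis \cite[Sections~6.1, 6.2, and 6.3.3]{BerT89}, whose Asynchronous Convergence Theorem is exactly your construction of nested $\ell_\infty$-balls $X(s)$ satisfying the box (Cartesian product) condition and the synchronous convergence condition $T(X(s)) \subset X(s+1)$, combined with the induction over $K(s)$ using conditions (a) and (b) of total asynchrony. Your closing remark about why the $\ell_\infty$ (rather than Euclidean) contraction is essential is also the right diagnosis and matches the paper's own discussion following the theorem.
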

We cannot expect to obtain a convergence rate in this setting (such as
sublinear with rate $1/k$), given that the assumptions on the ages of
the components in $\hat{x}^k$ are so weak. Although this result can be
generalized impressively and its proof is not too complex, we should
note that the $\ell_{\infty}$ contraction assumption
\eqref{eq:contraction.max} is quite strong. It is violated even by
some strictly convex objectives $f$. For example, when $f(x) =
(1/2)x^TQx$ with
\[
Q=\left[ \begin{matrix} 1 & 1 \\ 1 & 2 \end{matrix} \right],
\]
we have $f$ strictly convex with minimizer $x^*=0$. However the
mapping $T(x) = (I-\alpha Q)x$ is not contractive for any $\alpha>0$;
we have for example that $\|T(x) \|_{\infty} \ge \|x\|_{\infty}$ when
$x=(1, -1)^T$.

We turn now to {\em partly asynchronous} variants of
Algorithm~\ref{alg:pcd.2}, in which we make stronger assumptions on
the ages of the components of $\hat{x}^k$. Liu and
Wright~\cite{LiuW14c} consider a version of Algorithm~\ref{alg:pcd.2}
that is the parallel analog of Algorithm~\ref{alg:rcd}, in that each
update component $i_k$ is chosen independently and randomly with equal
probability from $\{1,2,\dotsc,n \}$. They assume that no component of
$\hat{x}^k$ is older than a nonnegative integer $\tau$ --- the
``maximum delay'' --- for any $k$. Specifically, they express the
difference between $x^k$ and $\hat{x}^k$ in terms of ``missed
updates'' to $x$, as follows:
\begin{equation} \label{eq:ji.1}
x^k = \hat{x}^k + \sum_{l \in K(j)} (x^{l+1}-x^l),
\end{equation}
where $K(j)$ is a set of iteration numbers drawn from the set $\{ j-q
\, : \, q=1,2,\dotsc, \tau \}$.  The value of $\tau$ is related to the
number of processors $P$ involved in the computation. If all
processors are performing their updates at approximately the same
rates, we could expect $\tau$ to be a modest multiple of $P$ ---
perhaps $\tau = 2P$ or $\tau=3P$, to allow a safety margin for
occasional delays. Hence the value of $\tau$ is an indicator of
potential parallelism in the algorithm.

In \cite{LiuW14c}, the steplengths in Algorithm~\ref{alg:pcd.2} are
fixed as follows:
\begin{equation} \label{eq:ji.4}
\alpha_k \equiv \frac{\gamma}{\Lmax},
\end{equation}
where $\gamma$ is chosen to ensure that Algorithm~\ref{alg:pcd.2}
progresses steadily toward a solution, but not too rapidly. Too-rapid
convergence would cause the information in $\hat{x}^k$ to become too
stale too quickly, so the gradient component $[\nabla
  f(\hat{x}^k)]_{i_k}$ would lose its relevance as a suitable update
for the variable component $x_{i_k}$ at iteration $k$. Steady
convergence is enforced by choosing some $\rho>1$ and requiring that
\begin{equation} \label{eq:ji.2}
E \| x^{k-1} - \bar{x}^k \|^2 \le \rho  E \| x^k - \bar{x}^{k+1} \|^2,
\end{equation}
where $\bar{x}^k$ is the vector that would hypothetically be obtained
if we were to apply the the update to {\em all} components, that is,
\[
\bar{x}^{k+1} := x^k - \frac{\gamma}{\Lmax} \nabla f(\hat{x}^k),
\]
and the expectations $E(\cdot)$ are taken over all random variables
$i_0,i_2,\dotsc$. Condition \eqref{eq:ji.2} ensures that the
``expected squared update norms'' decrease by at most a factor of
$1/\rho$ at each iteration.

The main results in \cite{LiuW14c} apply to composite functions
\eqref{eq:reg}, \eqref{eq:reg.sep}, but for simplicity here we state
the result in terms of the problem \eqref{eq:f}, where $f$ is convex
and continuously differentiable, with nonempty solution set $\cS$ and
optimal objective value $f^*$. We use $P_{\cS}$ to denote projection
onto $\cS$, and recall the definition \eqref{eq:def:Lambda} of the
ratio $\Lambda$ between different varieties of Lipschitz
constants. The results also make use of an {\em optimal strong
  convexity} condition, which is that the following inequality holds
for some $\sigma>0$:
\begin{equation} \label{eq:osc}
f(x) - f^* \ge \frac{\sigma}{2} \| x-P_{\cS}(x) \|^2, \quad
\mbox{for all $x$.}
\end{equation}

The following result is a modification of \cite[Corollary~2]{LiuW14c}.
\begin{theorem} \label{th:ji}
Suppose that Assumption~\ref{ass:fconv} holds, and that
\begin{equation} \label{eq:ji.3}
4 e \Lambda(\tau+1)^2 \le \sqrt{n}.
\end{equation}
Then by setting $\gamma=1/2$ in \eqref{eq:ji.4} (that is, choosing
steplengths $\alpha_k \equiv 1/(2 \Lmax)$), we have that
\begin{equation} \label{eq:ji.6}
E \left( f(x^k) \right) - f^*  \le \frac{n (\Lmax  \| x^0-P_{\cS}(x^0) \|^2 + f(x^0)-f^*)}{n+k}.
\end{equation}
Assuming in addition that \eqref{eq:osc} is satisfied for some
$\sigma>0$, we obtain the following linear rate:
\begin{align} 
\nonumber
E & \left( f(x^k) \right) - f^* \\
\label{eq:ji.5}
& \le
\left( 1- \frac{\sigma}{n(\sigma + 2\Lmax)} \right)^k
( \Lmax \|x^0-P_{\cS}(x^0) \|^2 + f(x^0) - f^* ).
\end{align}
\end{theorem}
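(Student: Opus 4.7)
The plan is to follow the standard descent-lemma-plus-delay-control template used in analyses of asynchronous randomized coordinate descent. The starting point is a Taylor/Lipschitz bound along the updated coordinate: since $x^{k+1} - x^k = -\alpha_k [\nabla f(\hat{x}^k)]_{i_k} e_{i_k}$, the coordinate Lipschitz property \eqref{eq:Li} yields
\[
f(x^{k+1}) \le f(x^k) - \alpha_k [\nabla f(x^k)]_{i_k} [\nabla f(\hat{x}^k)]_{i_k} + \tfrac{L_{i_k}}{2}\alpha_k^2 [\nabla f(\hat{x}^k)]_{i_k}^2.
\]
Splitting $[\nabla f(x^k)]_{i_k} = [\nabla f(\hat{x}^k)]_{i_k} + ([\nabla f(x^k)]_{i_k} - [\nabla f(\hat{x}^k)]_{i_k})$ and taking conditional expectation over $i_k$ (which is independent of $x^k$ and $\hat{x}^k$ conditioned on prior history), the cross term produces a contribution of size $\tfrac{1}{n}\langle \nabla f(x^k) - \nabla f(\hat{x}^k), \nabla f(\hat{x}^k)\rangle$, which I will control by Cauchy--Schwarz together with the restricted Lipschitz bound $\|\nabla f(x^k) - \nabla f(\hat{x}^k)\| \le \Lres \|x^k - \hat{x}^k\|$ implied by iterating \eqref{eq:Lres} along the missed-update chain \eqref{eq:ji.1}.

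Next I would quantify the delay error. Writing $x^k - \hat{x}^k = \sum_{l\in K(k)}(x^{l+1}-x^l)$ with $|K(k)|\le\tau$, each single-coordinate increment satisfies $\|x^{l+1}-x^l\| = \alpha_l |[\nabla f(\hat{x}^l)]_{i_l}|$. Using Cauchy--Schwarz over $K(k)$ and the AM-GM splitting $ab \le \tfrac{\epsilon}{2} a^2 + \tfrac{1}{2\epsilon} b^2$, the cross term is absorbed into a convex combination of $E\|\nabla f(\hat{x}^k)\|^2$ and past quantities $E[\nabla f(\hat{x}^l)]_{i_l}^2$. Substituting $\alpha_k = \gamma/\Lmax$ with $\gamma = 1/2$ and writing $\Lambda = \Lres/\Lmax$ \eqref{eq:def:Lambda}, this produces a one-step bound of the form
\[
E f(x^{k+1}) - E f(x^k) \le -\frac{c_1(\gamma,\tau,\Lambda)}{n\Lmax}\, E\|\nabla f(\hat{x}^k)\|^2 + \frac{c_2(\gamma,\tau,\Lambda)}{n^2\Lmax}\sum_{l\in K(k)} E\|\nabla f(\hat{x}^l)\|^2 / n,
\]
and the condition \eqref{eq:ji.3}, namely $4e\Lambda(\tau+1)^2 \le \sqrt{n}$, is exactly what guarantees that the net coefficient on $E\|\nabla f(\hat{x}^k)\|^2$ (after telescoping away the past-term contributions) remains bounded below by a positive constant proportional to $1/(n\Lmax)$.

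The central technical lemma, and the hardest step to execute cleanly, is the "conditional progress" estimate that replaces the monotonicity condition \eqref{eq:ji.2}. One must show, by an induction on $k$, that the expected squared update norms $E\|x^{k+1}-x^k\|^2$ (equivalently $E\,\alpha_k^2 [\nabla f(\hat{x}^k)]_{i_k}^2$) form a slowly-varying sequence; a convenient way is to prove the geometric ratio bound $E[\nabla f(\hat{x}^{k-1})]_{i_{k-1}}^2 \le \rho\, E[\nabla f(\hat{x}^k)]_{i_k}^2$ for an appropriate $\rho$ depending on $\tau$. The bound \eqref{eq:ji.3} is chosen precisely so that the factor $(1 + \Lambda/\sqrt{n})^\tau$ that arises in this induction stays close to a constant (bounded by $e^{1/(4e(\tau+1))}$ or similar); this is the delicate calculation and is where the $e$ and the $(\tau+1)^2$ dependence enter.

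Once the clean recursion $E f(x^{k+1}) - f^* \le E f(x^k) - f^* - \tfrac{c}{n\Lmax}\, E\|\nabla f(x^k)\|^2$ is in hand (where I have used another application of the gradient-difference bound to replace $\hat{x}^k$ by $x^k$ up to absorbed constants), the two conclusions follow by arguments that parallel the proof of Theorem~\ref{th:rcd}. For \eqref{eq:ji.6}, I would combine convexity with the projection onto $\cS$ to obtain $f(x^k) - f^* \le \|\nabla f(x^k)\|\cdot\|x^k - P_\cS(x^k)\|$; combined with boundedness of iterates in the level set (Assumption~\ref{ass:fconv}) and the $1/\phi_{k+1} - 1/\phi_k$ telescoping trick of Theorem~\ref{th:rcd}, this gives the claimed $O(n/k)$ rate (with the $\Lmax\|x^0 - P_\cS(x^0)\|^2 + f(x^0) - f^*$ initial term arising from tracking a Lyapunov function that combines iterate distance and function gap, which is needed to carry the asynchrony terms). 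For \eqref{eq:ji.5}, substituting the optimal strong convexity bound \eqref{eq:osc} (in the form $\|\nabla f(x^k)\|^2 \ge 2\sigma(f(x^k)-f^*)/(1+2\Lmax/\sigma)$, which follows from combining \eqref{eq:osc} with the descent lemma) into the recursion yields the stated linear contraction factor $1 - \sigma/[n(\sigma+2\Lmax)]$.
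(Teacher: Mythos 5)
The paper does not actually prove Theorem~\ref{th:ji}; it states the result as a modification of \cite[Corollary~2]{LiuW14c} and refers the reader there, so there is no in-paper proof to compare against. Your outline does reconstruct the broad strategy of that reference correctly: the coordinate descent lemma applied with the delayed gradient $\nabla f(\hat{x}^k)$, the splitting of the cross term, control of $\|\nabla f(x^k)-\nabla f(\hat{x}^k)\|$ via $\Lres$ and the missed-update decomposition \eqref{eq:ji.1}, and the recognition that the near-monotonicity condition \eqref{eq:ji.2} must be established by induction and that \eqref{eq:ji.3} is what keeps the accumulated factor $(1+O(\Lambda/\sqrt{n}))^{\tau}$ bounded. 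That induction is indeed the crux, and you correctly flag it as the hardest step; but flagging it is not the same as doing it, and as written your argument has no content there beyond the assertion that it works.

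The more concrete gap is in the endgame. You reduce to a recursion purely in $\phi_k = E(f(x^k))-f^*$ and then invoke the telescoping of Theorem~\ref{th:rcd} for \eqref{eq:ji.6} and a PL-type substitution for \eqref{eq:ji.5}. Neither step produces the stated constants. The form of both bounds --- the initial quantity $\Lmax\|x^0-P_{\cS}(x^0)\|^2 + f(x^0)-f^*$, the $n/(n+k)$ shape of \eqref{eq:ji.6}, and the contraction factor $1-\sigma/[n(\sigma+2\Lmax)]$ --- comes from running the recursion on the combined Lyapunov quantity $\Lmax E\|x^k-P_{\cS}(x^k)\|^2 + E(f(x^k))-f^*$ (tracking the squared distance to $\cS$ and the function gap \emph{jointly}), with the factor $\sigma/(\sigma+2\Lmax)$ arising from balancing the two terms of that quantity, not from a gradient-domination inequality. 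Your claimed inequality $\|\nabla f(x^k)\|^2 \ge 2\sigma(f(x^k)-f^*)/(1+2\Lmax/\sigma)$ does not follow from \eqref{eq:osc} plus the descent lemma in the form stated (what \eqref{eq:osc} and convexity give directly is $f(x)-f^*\le \frac{2}{\sigma}\|\nabla f(x)\|^2$, which would yield a contraction factor of a different form). You do mention the combined Lyapunov function parenthetically for the sublinear case, but the linear-rate derivation as you describe it would not arrive at \eqref{eq:ji.5}. To complete the proof you would need to set up and contract that combined quantity from the start, as is done in \cite{LiuW14c}.
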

A comparison with Theorem~\ref{th:rcd}, which shows convergence rates
for serial randomized CD (Algorithm~\ref{alg:rcd}) shows a striking
similarity in convergence bounds.  The factor-of-$2$ difference in
steplength between the serial and parallel variants accounts for most
of the difference between the linear rates \eqref{eq:rcd.linear} and
\eqref{eq:ji.5}, while there is an extra term $n$ in the denominator
of the sublinear rate \eqref{eq:ji.6}. We conclude that we do not pay
q high overhead (in terms of total workload) for parallel
implementation, and hence that near-linear speedup can be
expected. (Indeed, computational results in \cite{LiuW14c} and
\cite{LiuW13a} observe near-linear speedup for multicore asynchronous
implementations.) 

These encouraging conclusions depend critically on the condition
\eqref{eq:ji.3}, which is an upper bound on the allowable delay $\tau$
in terms of $n$ and the ratio $\Lambda$ from \eqref{eq:def:Lambda}.
For functions $f$ with weak coupling between the components of $x$
(for example, when off-diagonals in the Hessian $\nabla^2 f(x)$ are
small relative to the diagonals), we have $\Lambda$ not much greater
than $1$, so the maximum delay can be of the order of $n^{1/4}$ before
there is any attenuation of linear speedup. When stronger coupling
exists, the restriction on $\tau$ may be quite tight, possibly not
much greater than $1$. A more general convergence result
\cite[Theorem~1]{LiuW14c} shows that in this case, we can choose
smaller values of $\gamma$ in \eqref{eq:ji.4}, allowing graceful
degradation of the convergence bounds while still obtaining fairly
efficient parallel implementations.

We note that an earlier analysis in \cite{LiuW13a} made a stronger
assumption on $\hat{x}^k$ --- that it is equal to some earlier iterate
$x^j$ of Algorithm~\ref{alg:pcd.2}, where $k \ge j \ge k-\tau$, that
is, the earlier iterate is no more than $\tau$ cycles old. (A similar
assumption was used to analyze convergence of as asynchronous
stochastic gradient algorithm in \cite{Hogwild-nips}.) This stronger
assumption yields stronger convergence results, in that the bound on $\tau$ in
\eqref{eq:ji.3} can be loosened.  However, the assumption may not
always hold, since some parts of $x$ in memory may be altered by some
cores as they are being read by another core, a phenomenon referred to
in \cite{LiuW14c} as ``inconsistent reading.''

\section{Conclusion}
\label{sec:conclusion}

We have surveyed the state of the art in convergence of coordinate
descent methods, with a focus on the most elementary settings and the
most fundamental algorithms. The recent literature contains many
extensions, enhancements, and elaborations;
\nocite{Nes14d} 
we refer interested readers to the bibliography of this
paper, and note that new works are appearing at a rapid pace.

Coordinate descent method have become an important tool in the
optimization toolbox that is used to solve problems that arise in
machine learning and data analysis, particularly in ``big data''
settings. We expect to see further developments and extensions,
further customization of the approach to specific problem structures,
further adaptation to various computer platforms, and novel
combinations with other optimization tools to produce effective
``solutions'' for key application areas.

\begin{acknowledgements}
I thank Ji Liu for the pleasure of collaborating with him on this
topic over the past two years. I am grateful to the editors and
referees of the paper, whose expert and constructive comments led to
numerous improvements.
\end{acknowledgements}



\end{document}